\newcommand{\al}{\alpha}
\newcommand{\be}{\beta}
\newcommand{\ga}{\gamma}
\newcommand{\de}{\delta}
\newcommand{\la}{\lambda}
\newcommand{\eps}{\varepsilon}
\newcommand{\vv}{\varphi}
\newcommand{\iy}{\infty}
\theoremstyle{plain}
\numberwithin{equation}{section}
\newtheorem{thm}{Theorem}[section]
\newtheorem{lem}[thm]{Lemma}
\theoremstyle{definition}
\newtheorem{alg}[thm]{Algorithm}
\newtheorem{ip}[thm]{Inverse Problem}
\theoremstyle{remark}
\DeclareMathOperator*{\Res}{Res}
\begin{document}

\begin{center}
{\large\bf Solving an inverse problem for the Sturm-Liouville operator with a singular potential by Yurko's method}
\\[0.2cm]
{\bf Natalia P. Bondarenko} \\[0.2cm]
\end{center}

\vspace{0.5cm}

{\bf Abstract.} An inverse spectral problem for the Sturm-Liouville operator with a singular potential from the class $W_2^{-1}$ is solved by the method of spectral mappings. We prove the uniqueness theorem, develop a constructive algorithm for solution and obtain necessary and sufficient conditions of solvability for the inverse problem in the self-adjoint and the non-self-adjoint cases.

\medskip

{\bf Keywords:} inverse spectral problems; Sturm-Liouville operator; singular potential; method of spectral mappings.

\medskip

{\bf AMS Mathematics Subject Classification (2010):} 34A55 34B09 34B24 34L40 

\vspace{1cm}

\section{Introduction}

This paper concerns the theory of inverse spectral problems. Such problems consist in recovering differential operators from their spectral characteristics. Inverse spectral problems have applications in quantum mechanics, geophysics, chemistry, electronics and other branches of science and engineering (see, e.g., \cite{FY01} and references therein). The basic results of the inverse problem theory were obtained for operators induced by the Sturm-Liouville expression $\ell y := -y'' + q(x) y$ with the \textit{regular} (i.e., square integrable) potential $q$ (see \cite{FY01, Mar77, Lev84, PT87}). In recent years, spectral analysis of differential operators with {\it singular} coefficients from spaces of distributions has attracted much attention of mathematicians (see \cite{SS99, Sav01, SS01, HM-trans, SS06, HM-asympt, Mirz14, MS16, KMS18, KM19, HM-sd, HM-2sp, HM-half, SS05, HM06, FIY08, SS08, DM09, MT09, SS10, Hryn11, HP12, Pron13, Sav16, Bond18, Bond19, Gul19, Vas19}). Properties of spectral characteristics and solutions of differential equations with singular coefficients were studied in \cite{SS99, Sav01, SS01, HM-trans, SS06, HM-asympt, Mirz14, MS16, KMS18, KM19}. Some aspects of inverse problem theory for differential operators with singular coefficients have been investigated in \cite{HM-sd, HM-2sp, HM-half, SS05, HM06, FIY08, SS08, DM09, MT09, SS10, Hryn11, HP12, Pron13, Sav16, Bond18, Bond19, Gul19, Vas19}. Nevertheless, a number of open questions still remain in this field.

In this paper, we consider the differential expression $\ell y = -y'' + q(x) y$ with the singular complex-valued potential $q$ from the class $W_2^{-1}(0, \pi)$. This means that $q = \sigma'$, where $\sigma$ is a function from $L_2(0, \pi)$, and the derivative is understood in the sense of distributions. Then the differential expression $\ell y$ can be represented in the following form:
$$
\ell y = -(y^{[1]})' - \sigma(x) y^{[1]} - \sigma^2(x) y,
$$
where $y^{[1]}(x) := y'(x) - \sigma(x) y(x)$ is the so-called {\it quasi-derivative}.

Consider the boundary value problem
\begin{gather} \label{eqv}
    \ell y = \la y, \quad x \in (0, \pi), \\ \nonumber
    y^{[1]}(0) - h y(0) = 0, \quad y^{[1]}(\pi) + H y(\pi) = 0,
\end{gather}
where $h$ and $H$ are complex constants, functions $y$, $y^{[1]}$ are absolutely continuous on $[0, \pi]$ and $(y^{[1]})' \in L_2(0, \pi)$. Without loss of generality we assume that $h = 0$. One can easily achieve this condition by the shift $\sigma(x) := \sigma(x) + h$. Thus, we denote by $L = L(\sigma, H)$ the boundary value problem for equation~\eqref{eqv} with the boundary conditions
\begin{equation} \label{bc}
   y^{[1]}(0) = 0, \quad y^{[1]}(\pi) + H y(\pi) = 0. 
\end{equation}

Let $\vv(x, \la)$ be the solution of equation~\eqref{eqv} satisfying the initial conditions $\vv(0, \la) = 1$, $\vv^{[1]}(0, \la) = 0$. The functions $\vv(x, \la)$, $\vv^{[1]}(x, \la)$ are entire by $\la$ for each fixed $x \in [0, \pi]$. The spectrum of $L$ is a countable set of complex eigenvalues $\{ \la_n \}_{n = 0}^{\iy}$ ($|\la_n| \le |\la_{n + 1}|$, $n \ge 0$). The eigenvalues coincide with the zeros of the characteristic function $\Delta(\la) := \vv^{[1]}(\pi, \la) + H \vv(\pi, \la)$. For simplicity, we assume that all the eigenvalues are simple, i.e., $\la_n \ne \la_k$ for $n \ne k$. The case of multiple eigenvalues can be treated similarly to \cite{But07}. 

Define the {\it weight numbers} as follows:
$$
\al_n := \left( \int_0^{\pi} \vv^2(x, \la_n) \, dx\right)^{-1}, \quad n \ge 0.
$$
The numbers $\{ \la_n, \al_n \}_{n = 0}^{\iy}$ are called the {\it spectral data} of $L$. The paper is devoted to the following problem.

\begin{ip} \label{ip:1}
Given the spectral data $\{ \la_n, \al_n \}_{n \ge 0}$, find $\sigma$ and $H$.
\end{ip}

The goal of this paper is to solve Inverse Problem~\ref{ip:1} by the {\it method of spectral mappings} developed by Yurko (see \cite{FY01, Yur02}). We prove the uniqueness theorem, obtain a constructive algorithm of solution together with necessary and sufficient conditions of solvability for Inverse Problem~\ref{ip:1} in the self-adjoint and the non-self-adjoint cases.

The method of spectral mappings consists in reduction of a nonlinear inverse problem to a linear equation in a Banach space (the so-called \textit{main equation}). This reduction is based on the Cauchy-Poincar\'e contour integration in the complex plane of the spectral parameter. These ideas first appeared in the papers of Levinson \cite{Lev49} and Leibenson \cite{Leib66, Leib71}. Later on, the method of spectral mappings allowed Yurko and his followers to solve inverse problems for higher-order differential operators, for differential systems, for differential operators on geometrical graphs, and for other operator classes appearing in applications (see \cite{FY01, Yur02, Yur05, Yur16} and references therein). We note that, for different classes of differential operators, the construction of the main equation and of an appropriate Banach space differs and may require significant modifications comparing with the classical Sturm-Liouville operator.

For the Sturm-Liouville inverse problems with singular potentials, the majority of the known results were obtained by development of the Gelfand-Levitan method (see \cite{HM-sd, HM-2sp, HM-half, Hryn11}) and of the Trubowitz approach (see \cite{SS05, SS08, SS10, Sav16}). The results of all those papers related to inverse problems concern only the self-adjoint case, in which $\sigma$ is real-valued.
Some ideas of Yurko's method were applied to some special issues of inverse problem theory for differential operators with singular coefficients on graphs in \cite{FIY08, Bond19, Vas19}. Nevertheless, it was unknown how to obtain necessary and sufficient conditions of inverse problem solvability, being a crucial issue of inverse problem theory, by using the method of spectral mappings. This paper aims to cover this gap. 

To present our ideas, we choose the simplest problem~\eqref{eqv}-\eqref{bc} as a model example. 
In the future, we plan to generalize our approach to other important classes of differential operators. We hope that Yurko's method will be useful for further extension of inverse problem theory, in particular, to non-self-adjoint operators with singular coefficients. 

It worth to mention that the inverse problem for equation~\eqref{eqv} with the Dirichlet boundary conditions $y(0) = y(\pi) = 0$ in the self-adjoint case was studied by Hryniv and Mykytyuk in \cite{HM-sd}. They also formulated the results for the case of the Robin boundary conditions~\eqref{bc} without detailed proofs. Anyway, the case of the Robin boundary conditions can be reduced to the case of the Dirichlet boundary conditions by the Darboux-type transformations. For the Sturm-Liouville operators with singular potentials, such transformations were recently obtained by Guliyev \cite{Gul19}. This is another way to solve our inverse problem in the self-adjoint case. Nevertheless, as far as we know, Inverse Problem~\ref{ip:1} in the non-self-adjoint case has not been studied before.

Throughout this paper, we follow the general strategy of Yurko's method from \cite[Sections 1.4, 1.6]{FY01} and focus on the differences caused by the singular potential in more detail. Section~2 provides preliminaries, in particular, transformation operators and asymptotic formulas for solutions of equation~\eqref{eqv}. Section~3 is devoted to the uniqueness theorem for Inverse Problem~\ref{ip:1}. In Section~4, we construct the main equation of the inverse problem in an appropriate Banach space and investigate the properties of the operator, participating in that equation. In Section~5, we formulate and prove our main theorems on necessary and sufficient conditions of the inverse problem solvability (Theorem~\ref{thm:nsc} for the self-adjoint case and Theorem~\ref{thm:nsc2} for the non-self-adjoint case). Section~5 contains also Algorithm~\ref{alg:1} for constructive solution of the inverse problem. In Appendix, an auxiliary lemma concerning entire functions is proved.

\section{Preliminaries}

First of all, let us introduce some {\bf notations}:

\begin{itemize}
\item $\la = \rho^2$, $\tau := \mbox{Im}\,\rho$, $\rho_n := \sqrt{\la_n}$, $\arg{\rho_n} \in [-\tfrac{\pi}{2}, \tfrac{\pi}{2})$, $n \ge 0$. 
\item The symbol $C$ denotes various positive constants independent of $x$, $n$, $\la$, etc.
\item The notation $\{ \varkappa_n \}$ is used for various sequences from $l_2$.
\item The notation $\varkappa_a(\rho)$ is used for various entire functions in $\rho$ of exponential type not greater than $a$ and such that $\int_{\mathbb R} |\varkappa_a(\rho)|^2 \, d\rho < \iy$. By Paley-Wiener Theorem,
$$
\varkappa_a(\rho) = \int_{-a}^a f(x) \exp(i \rho x) \, dx, \quad f \in L_2(-a, a).
$$
Further we need the estimate $\varkappa_a(\rho) = o(\exp(|\tau|a))$ as $|\rho| \to \iy$ uniformly by $\arg \rho$. 
\item $\langle z, y \rangle := z y^{[1]} - z^{[1]} y$.
\end{itemize}

The following theorem provides transformation operators for the solution $\vv(x, \la)$ and its quasi-derivative $\vv^{[1]}(x, \la)$. 

\begin{thm} \label{thm:trans}
The following relations hold
\begin{align} \label{transK}
    & \vv(x, \la) = \cos \rho x + \int_0^x \mathscr K(x, t) \cos \rho t \, dt, \\ \label{transN}
    & \vv^{[1]}(x, \la) = -\rho \sin \rho x + \rho \int_0^x \mathscr N(x, t) \sin \rho t \, dt + \mathscr C(x),
\end{align}
where the kernels $\mathscr K$ and $\mathscr N$ are square integrable in the region $\mathbb D := \{ (x, t) \colon 0 < t < x < \pi \}$ and the function $\mathscr C$ is continuous on $[0, \pi]$. Moreover, for each fixed $x \in (0, \pi]$, the functions $\mathscr K(x, .)$ and $\mathscr N(x, .)$ belong to $L_2(0, x)$ and the corresponding $L_2$-norms $\| \mathscr K(x, .) \|_{L_2(0, x)}$ and $\| \mathscr N(x, .) \|_{L_2(0, x)}$ are uniformly bounded with respect to $x \in (0, \pi]$. Analogously, for each fixed $t \in [0, \pi)$, the functions $\mathscr K(., t)$ and $\mathscr N(., t)$ belong to $L_2(t, \pi)$ and the corresponding $L_2$-norms are uniformly bounded with respect to $t \in [0, \pi)$.
\end{thm}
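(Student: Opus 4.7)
The plan is to substitute the ansatze \eqref{transK} and \eqref{transN} into the equation $\ell\vv = \la\vv$, rewritten in the quasi-derivative form $-(\vv^{[1]})' = \sigma\vv^{[1]} + \sigma^2\vv + \la\vv$, together with the compatibility identity $\vv^{[1]} = \vv' - \sigma\vv$, and to derive a coupled Volterra-type system for the triple $(\mathscr K, \mathscr N, \mathscr C)$, which I would then solve by successive approximations in suitable Banach spaces.

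First I would formally differentiate \eqref{transK} and subtract $\sigma(x)\vv(x,\la)$; after an integration by parts in the integral in \eqref{transN} exploiting $\rho\sin\rho t = -\partial_t\cos\rho t$, matching the coefficients of $\cos\rho t$ in $\vv' - \sigma\vv$ and in \eqref{transN} yields an algebraic relation tying $\mathscr N$ to $\mathscr K$, while the boundary terms from the integration by parts produce an explicit formula for $\mathscr C$ of the schematic form
\begin{equation*}
\mathscr C(x) = -\sigma(x) + (\text{terms linear in } \mathscr K \text{ and quadratic in } \sigma).
\end{equation*}
Substituting \eqref{transK}--\eqref{transN} into the differential equation and passing to characteristic coordinates $\xi = (x+t)/2$, $\eta = (x-t)/2$ converts the resulting Goursat-type problem into a Volterra integral system for $\mathscr K$ and $\mathscr N$ on the triangle $\mathbb D$.

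Next I would solve this system by Picard iteration in the Banach space
\begin{equation*}
\mathcal B_1 := \Big\{ f \colon \mathbb D \to \mathbb C \;\Big|\; \|f\|_{\mathcal B_1} := \sup_{x\in(0,\pi]}\|f(x,\cdot)\|_{L_2(0,x)} < \iy \Big\},
\end{equation*}
together with the analogous space $\mathcal B_2$ using the transposed norm $\sup_{t\in[0,\pi)}\|f(\cdot,t)\|_{L_2(t,\pi)}$. At each step, the contribution of $\sigma$ to the integrand is estimated by Cauchy--Schwarz, gaining a factor $\|\sigma\|_{L_2}$ and a factorial $1/\sqrt{n!}$, so the Neumann series converges absolutely in $\mathcal B_1 \cap \mathcal B_2$. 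This simultaneously delivers the square-integrability of $\mathscr K$ and $\mathscr N$ on $\mathbb D$ and the asserted uniform $L_2$-bounds on their sections along both variables, while continuity of $\mathscr C$ on $[0,\pi]$ follows from its explicit representation combined with $\sigma \in L_2$.

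The main obstacle is the low regularity of $\sigma$: the products $\sigma\vv^{[1]}$ and $\sigma^2\vv$ have no classical pointwise meaning against generic $L_2$-kernels, so one cannot differentiate $\mathscr K$ in $t$ or form pointwise products of $\sigma$ with the kernels. I would handle this by keeping the equations in integrated Volterra form throughout and interpreting all products only after pairing with $\cos\rho t$ or $\sin\rho t$. The extra term $\mathscr C(x)$ in \eqref{transN}, absent in the classical regular-potential theory, is precisely what absorbs the singular contribution $-\sigma(x)$ that appears when \eqref{transK} is formally differentiated and $\sigma(x)\vv(x,\la)$ is subtracted; without introducing $\mathscr C$ no square-integrable kernel $\mathscr N$ can satisfy \eqref{transN}.
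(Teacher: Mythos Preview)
Your overall strategy---reduce to a coupled Volterra-type system for $(\mathscr K, \mathscr N, \mathscr C)$ and solve it by successive approximations with factorial gain---is exactly what the paper does. The gap is in how you propose to \emph{derive} that Volterra system.

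The paper does not substitute the ansatze into the differential equation. It first writes down the Volterra integral equations that $\vv$ and $\vv^{[1]}$ themselves satisfy (obtained from \eqref{eqv} by variation of parameters; they involve only $\sigma$ and $\sigma^2$, never $\sigma'$):
\begin{align*}
\vv(x,\la) &= \cos\rho x + \int_0^x\Bigl(\cos\rho(x-t)\,\sigma(t) - \tfrac{\sin\rho(x-t)}{\rho}\,\sigma^2(t)\Bigr)\vv(t,\la)\,dt - \int_0^x\tfrac{\sin\rho(x-t)}{\rho}\,\sigma(t)\,\vv^{[1]}(t,\la)\,dt,\\
\vv^{[1]}(x,\la) &= -\rho\sin\rho x - \int_0^x\bigl(\rho\sin\rho(x-t)\,\sigma(t) + \cos\rho(x-t)\,\sigma^2(t)\bigr)\vv(t,\la)\,dt - \int_0^x\cos\rho(x-t)\,\sigma(t)\,\vv^{[1]}(t,\la)\,dt,
\end{align*}
and then inserts \eqref{transK}--\eqref{transN} into \emph{these}. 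Expanding $\cos\rho(x-t)$, $\sin\rho(x-t)$ by the addition formulas and changing the order of integration produces the integral system for $(\mathscr K,\mathscr N,\mathscr C)$ directly, with no differentiation of kernels and no diagonal values $\mathscr K(x,x)$ ever appearing. Your route---differentiate \eqref{transK} in $x$, subtract $\sigma\vv$, integrate \eqref{transN} by parts in $t$, match coefficients---requires $\partial_x\mathscr K(x,t)$, $\mathscr K(x,x)$, $\partial_t\mathscr N(x,t)$ to make sense, and for $\sigma\in L_2$ they do not: the leading part of $\mathscr K$ turns out to be $\tfrac12\sigma\bigl(\tfrac{x+t}{2}\bigr)+\tfrac12\sigma\bigl(\tfrac{x-t}{2}\bigr)$, which has neither an $x$-derivative nor a pointwise diagonal value. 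Your closing remark about ``keeping the equations in integrated Volterra form throughout'' is the right instinct, but the concrete way to realize it is to start from the integral equations above, not from $\ell\vv=\la\vv$.

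This also shows that your schematic formula $\mathscr C(x) = -\sigma(x) + (\dots)$ is wrong. In the paper's derivation the zeroth iterate is
\[
\mathscr C_0(x) = -\tfrac12\int_0^x\sigma^2(t)\,dt - \tfrac14\int_0^x\bigl(\sigma^2(\tfrac{x+t}{2})+\sigma^2(\tfrac{x-t}{2})\bigr)\,dt,
\]
a continuous function of $x$; no bare $\sigma(x)$ appears in $\mathscr C$. Had $\mathscr C$ contained $-\sigma(x)$ it could not be continuous on $[0,\pi]$, contradicting the very statement you are proving.
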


Note that the representation~\eqref{transK} for $\vv(x, \la)$ was obtained in \cite{HM-trans}. We prove Theorem~\ref{thm:trans}, since we especially need the formula~\eqref{transN}.

\begin{proof}
Using equation~\eqref{eqv}, one can easily show that the following system of Volterra integral equations is fulfilled:
\begin{multline} \label{voltphi}
    \vv(x, \la) = \cos \rho x + \int_0^x \left( \cos \rho (x - t) \sigma(t) - \frac{\sin \rho (x - t)}{\rho} \sigma^2(t)\right) \vv(t, \la) \, dt \\ - \int_0^x \frac{\sin \rho (x - t)}{\rho} \sigma(t) \vv^{[1]}(t, \la) \, dt, 
\end{multline}
\vspace*{-20pt}
\begin{multline} \label{voltphi1}
    \vv^{[1]}(x, \la) = -\rho \sin \rho x - \int_0^x (\rho \sin \rho(x - t) \sigma(t) + \cos \rho (x - t) \sigma^2(t)) \vv(t, \la) \, dt \\ - \int_0^x \cos \rho (x - t) \sigma(t) \vv^{[1]}(t, \la) \, dt. 
\end{multline}

Formal substitution of the representations~\eqref{transK}, \eqref{transN} into \eqref{voltphi}, \eqref{voltphi1} yields the following system of integral equations with respect to the triple $(\mathscr K, \mathscr N, \mathscr C)$:
\begin{equation} \label{intsys}
\left.
\begin{array}{rl}
    \mathscr K(x, t) & = \mathscr K_0(x, t) + I_{\mathscr K}(\mathscr K, \mathscr N, \mathscr C), \\
    \mathscr N(x, t) & = \mathscr N_0(x, t) + I_{\mathscr N}(\mathscr K, \mathscr N, \mathscr C), \\
    \mathscr C(x) & = \mathscr C_0(x) + I_{\mathscr C}(\mathscr K, \mathscr C),
\end{array}\right\}
\end{equation}
where
\begin{align*}
    \mathscr K_0(x, t) & = \tfrac{1}{2} \sigma\left( \tfrac{x + t}{2}\right) + \tfrac{1}{2} \sigma\left( \tfrac{x - t}{2}\right) - \tfrac{1}{2} \int_0^x \sigma^2(s) \, ds - \tfrac{1}{4} \int_t^x \left( \sigma^2(\tfrac{x - s}{2}) - \sigma^2(\tfrac{x + s}{2})\right) \, ds, \\
    \mathscr N_0(x, t) & = \tfrac{1}{2} \sigma\left( \tfrac{x + t}{2}\right) - \tfrac{1}{2} \sigma\left( \tfrac{x - t}{2}\right) + \tfrac{1}{2} \int_0^x \sigma^2(s) \, ds + \tfrac{1}{4} \int_t^x \left( \sigma^2(\tfrac{x - s}{2}) + \sigma^2(\tfrac{x + s}{2})\right) \, ds, \\
    \mathscr C_0(x) & = -\tfrac{1}{2} \int_0^x \sigma^2(t)\,dt - \tfrac{1}{4} \int_0^x (\sigma^2(\tfrac{x + t}{2}) + \sigma^2(\tfrac{x - t}{2})) \, dt, \\
    I_{\mathscr K}(\mathscr K, \mathscr N, \mathscr C) & = \frac{1}{2} \int_{x - t}^x \mathscr K(s, t - x + s) \sigma(s) \, ds + \frac{1}{2}\int_{\frac{x - t}{2}}^{x - t} \mathscr K(s, x - s - t) \sigma(s) \, ds \\ & + \frac{1}{2}\int_{\frac{x + t}{2}}^x \mathscr K(s, x - s + t) \sigma(s) \, ds - \frac{1}{2} \int_{t}^x d\xi \biggl( \int_{x - \xi}^x \mathscr K(s, \xi - x + s) \sigma^2(s) \, ds \\ & + \int_{\frac{x - \xi}{2}}^{x - \xi} \mathscr K(s, x - s - \xi) \sigma^2(s) \, ds - \int_{\frac{x + \xi}{2}}^x \mathscr K(s, x - s + \xi) \sigma^2(s) \, ds \biggr) \\ & + \frac{1}{2} \int_{x - t}^x \mathscr N(s, t - x + s) \sigma(s) \, ds - \frac{1}{2}\int_{\frac{x - t}{2}}^{x - t} \mathscr N(s, x - s - t) \sigma(s) \, ds \\ & - \frac{1}{2}\int_{\frac{x + t}{2}}^x \mathscr N(s, x - s + t) \sigma(s) \, ds - \int_0^{x - \xi} \mathscr C(s) \sigma(s) \, ds, \\
    I_{\mathscr N}(\mathscr K, \mathscr N, \mathscr C) & = -\frac{1}{2} \int_{x - t}^x \mathscr K(s, t - x + s) \sigma(s) \, ds - \frac{1}{2}\int_{\frac{x - t}{2}}^{x - t} \mathscr K(s, x - s - t) \sigma(s) \, ds \\ & + \frac{1}{2}\int_{\frac{x + t}{2}}^x \mathscr K(s, x - s + t) \sigma(s) \, ds + \frac{1}{2} \int_{t}^x d\xi \biggl( \int_{x - \xi}^x \mathscr K(s, \xi - x + s) \sigma^2(s) \, ds \\ & + \int_{\frac{x - \xi}{2}}^{x - \xi} \mathscr K(s, x - s - \xi) \sigma^2(s) \, ds + \int_{\frac{x + \xi}{2}}^x \mathscr K(s, x - s + \xi) \sigma^2(s) \, ds \biggr) \\ & - \frac{1}{2} \int_{x - t}^x \mathscr N(s, t - x + s) \sigma(s) \, ds + \frac{1}{2}\int_{\frac{x - t}{2}}^{x - t} \mathscr N(s, x - s - t) \sigma(s) \, ds \\ & - \frac{1}{2}\int_{\frac{x + t}{2}}^x \mathscr N(s, x - s + t) \sigma(s) \, ds + \int_0^{x - \xi} \mathscr C(s) \sigma(s) \, ds, \\
    I_{\mathscr C}(\mathscr K, \mathscr C) & = -\frac{1}{2} \int_0^x d\xi \biggl( \int_{x - \xi}^x \mathscr K(s, \xi - x + s) \sigma^2(s) \, ds + \int_{\frac{x - \xi}{2}}^{x - \xi} \mathscr K(s, x - s - \xi) \sigma^2(s) \, ds \\ & + \int_{\frac{x + \xi}{2}}^x \mathscr K(s, x - s + \xi) \sigma^2(s) \, ds \biggr) - \int_0^x \mathscr C(s) \sigma(s) \, ds.
\end{align*}

We solve the system~\eqref{intsys} by iterations:
$$
\mathscr K_{n + 1} := I_{\mathscr K}(\mathscr K_n, \mathscr N_n, \mathscr C_n), \quad
\mathscr N_{n + 1} := I_{\mathscr N}(\mathscr K_n, \mathscr N_n, \mathscr C_n), \quad
\mathscr C_{n + 1} := I_{\mathscr C}(\mathscr K_n, \mathscr C_n), \quad n \ge 0.
$$
Induction yields the estimates:
\begin{gather*}
|\mathscr K_n(x, t)|, |\mathscr N_n(x, t)| \le \tfrac{1}{2} \left( |\sigma(\tfrac{x + t}{2})| + |\sigma(\tfrac{x - t}{2})|\right) Q^n(x) \sqrt{\tfrac{x^n}{n!}} + a^n Q^{n + 1}(x) \sqrt{\tfrac{x^{n-1}}{(n-1)!}}, \\
|\mathscr C_n(x)| \le a^n Q^{n + 1}(x) \sqrt{\tfrac{x^n}{n!}}, \quad n \ge 1,
\end{gather*}
where $Q(x) = \| \sigma \|_{L_2(0, x)}$ and $a$ is a positive constant depending on $\| \sigma \|_{L_2(0, \pi)}$.
Consequently, the series
$$
\mathscr K := \sum_{n = 0}^{\iy} \mathscr K_n, \quad \mathscr N := \sum_{n = 0}^{\iy} \mathscr N_n
$$
converge in $L_2(\mathbb D)$, and the series
$$
\mathscr C(x) := \sum_{n = 0}^{\iy} \mathscr C_n(x)
$$
converges absolutely and uniformly on $[0, \pi]$.
By construction, the functions $\mathscr K$, $\mathscr N$, and $\mathscr C$ satisfy the relations~\eqref{transK}, \eqref{transN}, and all the other claimed properties.
\end{proof}

For each fixed $x \in [0, \pi]$, the relations~\eqref{transK} and~\eqref{transN} yield the asymptotic formulas
\begin{gather}
\label{asymptphi}
    \vv(x, \la) = \cos \rho x + \varkappa_x(\rho), \quad
    \vv^{[1]}(x, \la) = - \rho \sin \rho x + \rho \varkappa_x(\rho) + \vv^{[1]}(x, 0), \\    
    \label{asymptDelta}
    \Delta(\la) = - \rho \sin \rho \pi + \rho \varkappa_{\pi}(\rho) + \Delta(0).
\end{gather}

The relation~\eqref{asymptDelta} implies the standard estimate from below:
\begin{gather} \label{Dbelow}
|\Delta(\la)| \ge C_{\de}|\rho| \exp(|\tau| \pi), \quad \rho \in G_{\de}, \quad |\rho| \ge \rho^*, \\ \label{defG}
G_{\de} := \{ \rho \in \mathbb C \colon |\rho - n| \ge \de, \: n \in \mathbb Z \}.
\end{gather}
Here we mean that, for every $\de > 0$, there exist positive constants $\rho^*$ and $C_{\de}$ such that \eqref{Dbelow} holds.

Denote by $S(x, \la)$ and $\Psi(x, \la)$ the solutions of equation~\eqref{eqv} satisfying the initial conditions
$$
S(0, \la) = 0, \quad S^{[1]}(0, \la) = 1, \quad \Psi(\pi, \la) = 1, \quad \Psi^{[1]}(\pi, \la) = -H.
$$
For each fixed $x \in [0, \pi]$, the functions $\Psi(x, \la)$ and $S(x, \la)$ are entire in $\la$. The following asymptotic formulas can be obtained similarly to~\eqref{asymptphi}:
\begin{equation}  \label{asymptpsi}
    \Psi(x, \la) = \cos \rho (\pi - x) + \varkappa_{\pi-x}(\rho), \quad \Psi^{[1]}(x, \la) = \rho \sin \rho (\pi - x) + \rho \varkappa_{\pi - x}(\rho) + \Psi^{[1]}(x, 0).
\end{equation}

The {\it Weyl solution} $\Phi(x, \la)$ is the solution of equation~\eqref{eqv}, satisfying the boundary conditions $\Phi^{[1]}(0, \la) = 1$, $\Phi^{[1]}(\pi, \la) + H \Phi(\pi, \la) = 0$. The {\it Weyl function} is defined as follows: $M(\la) := \Psi(0, \la)$. One can easily obtain the relations
\begin{gather} \label{Phi1}
    \Phi(x, \la) = -\frac{\Psi(x, \la)}{\Delta(\la)}, \quad M(\la) = -\frac{\Psi(0, \la)}{\Delta(\la)}, \\ \label{Phi2}
    \Phi(x, \la) = S(x, \la) + M(\la) \vv(x, \la), \\ \label{wron}
    \langle \vv(x, \la), \Phi(x, \la) \rangle \equiv 1.
\end{gather}
Consequently, $\Phi(x, \la)$ for each fixed $x \in [0, \pi]$ and $M(\la)$ are meromorphic functions of $\la$ with simple poles at $\la = \la_n$, $n \ge 0$. Note that
\begin{equation} \label{resM}
\Res_{\la = \la_n} M(\la) = \al_n, \quad n \ge 0.
\end{equation}

\begin{lem} \label{lem:asympt}
The following asymptotic relations hold
\begin{equation} \label{asymptla}
    \rho_n = n + \varkappa_n, \quad \al_n = \frac{2}{\pi} + \varkappa_n, \quad n \ge 0.
\end{equation}
\end{lem}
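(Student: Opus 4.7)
The plan is to handle the two asymptotic formulas separately: first $\rho_n = n + \varkappa_n$ from the structure of the characteristic function, then $\alpha_n = 2/\pi + \varkappa_n$ from the normalization integral via the transformation operator representation.

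For the eigenvalue asymptotics, I would start from \eqref{asymptDelta}, which writes $\Delta(\la) = -\rho\sin\rho\pi + \rho\varkappa_\pi(\rho) + \Delta(0)$, and combine it with the lower bound \eqref{Dbelow} valid on the strip-complement set $G_\de$. Applying Rouch\'e's theorem on the contours $|\rho| = N + \tfrac{1}{2}$ and on small circles of radius $\de$ around each nonnegative integer, one concludes that, for large $n$, each such small circle contains exactly one $\rho_n$, so $\rho_n = n + \varepsilon_n$ with $\varepsilon_n \to 0$. Substituting this representation into $\Delta(\rho_n^2) = 0$ and using $\sin(\rho_n\pi) = (-1)^n \sin(\pi\varepsilon_n)$ yields
\begin{equation*}
    (-1)^{n+1}\sin(\pi\varepsilon_n) = \varkappa_\pi(\rho_n) + \frac{\Delta(0)}{\rho_n}.
\end{equation*}

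To upgrade $\varepsilon_n \to 0$ to $\{\varepsilon_n\} \in l_2$, I would invoke Paley--Wiener: since $\varkappa_\pi(\rho) = \int_{-\pi}^\pi f(t) e^{i\rho t}\,dt$ with $f \in L_2(-\pi,\pi)$, Parseval's identity applied to the Fourier series of $f$ gives $\{\varkappa_\pi(n)\}_{n \ge 0} \in l_2$. Writing $\varkappa_\pi(\rho_n) - \varkappa_\pi(n) = \int_{-\pi}^\pi f(t)e^{int}(e^{i\varepsilon_n t}-1)\,dt$ and bounding $|e^{i\varepsilon_n t}-1| \le \pi|\varepsilon_n|$, one gets $|\varkappa_\pi(\rho_n) - \varkappa_\pi(n)| \le C|\varepsilon_n|$. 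Since $\varepsilon_n = o(1)$, we can absorb this into the left-hand side (using $\sin(\pi\varepsilon_n) = \pi\varepsilon_n(1+o(1))$) to conclude $\pi\varepsilon_n = (-1)^{n+1}\varkappa_\pi(n) + O(1/n) + o(\varepsilon_n)$, so $\{\varepsilon_n\} \in l_2$, proving the first formula.

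For the weight numbers, I would expand $\vv(x,\la_n) = \cos\rho_n x + \varkappa_x(\rho_n)$ from \eqref{transK}, where $\varkappa_x(\rho_n) = \int_0^x \mathscr K(x,t)\cos\rho_n t\,dt$. Squaring and integrating,
\begin{equation*}
    \int_0^\pi \vv^2(x,\la_n)\,dx = \int_0^\pi \cos^2(\rho_n x)\,dx + 2\int_0^\pi \cos(\rho_n x)\varkappa_x(\rho_n)\,dx + \int_0^\pi \varkappa_x(\rho_n)^2\,dx.
\end{equation*}
The first integral is $\pi/2 + \sin(2\pi\rho_n)/(4\rho_n) = \pi/2 + \{\varkappa_n\}$, using the first half of the lemma. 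The cross term equals $\iint_{\mathbb D} \mathscr K(x,t)\cos\rho_n x \cos\rho_n t\,dx\,dt$; since $\mathscr K \in L_2(\mathbb D)$ and the system $\{\cos nx \cos nt\}$ participates in an orthonormal system in $L_2((0,\pi)^2)$, Bessel's inequality (combined with the $|\varkappa_\pi(\rho_n)-\varkappa_\pi(n)| \le C|\varepsilon_n|$ trick applied in both variables) yields an $l_2$ sequence in $n$. The square term is controlled by $\sum_n \int_0^\pi \varkappa_x(\rho_n)^2\,dx \le C\|\mathscr K\|_{L_2(\mathbb D)}^2$, so it is even in $l_1$. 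Therefore $1/\alpha_n = \pi/2 + \{\varkappa_n\}$, and inverting gives the claim.

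The main obstacle is the bootstrap from $\varepsilon_n = o(1)$ to $\{\varepsilon_n\} \in l_2$, since a priori the sequence $\{\varkappa_\pi(\rho_n)\}$ is evaluated at perturbed points rather than at the integers where Paley--Wiener directly delivers $l_2$ control; this is handled by the Lipschitz-type estimate on $\varkappa_\pi$ sketched above. The analogous issue arises in the cross term of the normalization integral, and the same device resolves it.
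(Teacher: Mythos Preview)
Your argument for $\rho_n = n + \varkappa_n$ via Rouch\'e and the Paley--Wiener bootstrap is essentially what the paper indicates (it just says ``the standard method, based on Rouch\'e's Theorem and the relation~\eqref{asymptDelta}''). For $\al_n$, however, you take a genuinely different route. You compute the normalization integral $\int_0^{\pi}\vv^2(x,\la_n)\,dx$ directly from the transformation-operator representation~\eqref{transK}, controlling the cross and square terms via Bessel-type estimates on $\mathscr K\in L_2(\mathbb D)$. The paper instead combines the relations~\eqref{Phi1} and~\eqref{resM} to get the algebraic formula $\al_n = -\Psi(0,\la_n)\big/\tfrac{d}{d\la}\Delta(\la_n)$, and then simply substitutes the asymptotics $\Psi(0,\la)=\cos\rho\pi+\varkappa_{\pi}(\rho)$ and $\tfrac{d}{d\la}\Delta(\la)=-\tfrac{\pi}{2}\cos\rho\pi+\varkappa_{\pi}(\rho)$ together with $\rho_n=n+\varkappa_n$. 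The paper's approach is shorter and reuses the Weyl-function machinery already in place; yours is more self-contained (it never touches $\Psi$ or the residue relation) but requires the double-integral Bessel argument and the perturbation trick in two variables. Both are correct; the residue formula is perhaps more robust for later generalizations, while your direct computation makes the origin of the constant $2/\pi$ transparent as the reciprocal of $\tfrac{\pi}{2}=\int_0^{\pi}\cos^2(nx)\,dx$.
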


\begin{proof}
The asymptotic formula for $\rho_n$ is obtained by the standard method, based on Rouch\'{e}'s Theorem and the relation~\eqref{asymptDelta}. In order to study the asymptotic behavior of $\al_n$, we combine~\eqref{Phi1} and~\eqref{resM}:
\begin{equation} \label{aln}
\al_n = -\frac{\Psi(0, \la_n)}{\tfrac{d}{d\la} \Delta(\la_n)}.
\end{equation}
In view of~\eqref{asymptDelta} and~\eqref{asymptpsi}, we have
$$
\Psi(0, \la) = \cos \rho \pi + \varkappa_{\pi}(\rho), \quad \tfrac{d}{d\la} \Delta(\la) = - \frac{\pi}{2} \cos \rho \pi + \varkappa_{\pi}(\rho).
$$
Putting $\rho_n = n + \varkappa_n$, we arrive at the relation $\al_n = \tfrac{2}{\pi} + \varkappa_n$.
\end{proof}

Spectral data asymptotics analogous to~\eqref{asymptla} for the case of the Dirichlet boundary conditions are provided in \cite{HM-sd, SS10}.

Similarly to \cite[Theorem~1.4.6]{FY01}, we obtain the formula
\begin{equation} \label{seriesM}
M(\la) = \sum_{n = 0}^{\iy} \frac{\al_n}{\la - \la_n}.
\end{equation}
Thus, the Weyl function is uniquely determined by the spectral data and vice versa. Therefore Inverse Problem~\ref{ip:1} is equivalent to the following one.

\begin{ip}
Given the Weyl function $M(\la)$, find $\sigma$ and $H$.
\end{ip}

\section{Uniqueness}

The goal of this section is to prove the uniqueness theorem for Inverse Problem~\ref{ip:1}, by using the method of spectral mappings.

Along with the problem $L = L(\sigma, H)$, we consider another problem $\tilde L = L(\tilde \sigma, \tilde H)$ of the same form as $L$, but with different coefficients $\tilde \sigma$ and $\tilde H$. We agree that, if a certain symbol $\ga$ denotes an object related to $L$, the symbol $\tilde \ga$ with tilde will denote the analogous object related to $\tilde L$. We emphasize the difference of quasi-derivatives for these two problems: $y^{[1]} = y' - \sigma y$ for $L$ and $y^{[1]} = y' - \tilde \sigma y$ for $\tilde L$. The following theorem says that the spectral data uniquely specify the problem $L(\sigma, H)$.

\begin{thm} \label{thm:uniq}
If $\la_n = \tilde \la_n$ and $\al_n = \tilde \al_n$ for all $n \ge 0$, then $\sigma = \tilde \sigma$ in $L_2(0, \pi)$ and $H = \tilde H$.
\end{thm}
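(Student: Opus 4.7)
The plan is to follow Yurko's contour-integration scheme and study the matrix of spectral mappings relating the two problems. First, observe that \eqref{seriesM} expresses $M(\la)$ as a series in the spectral data, so $\la_n = \tilde\la_n$ and $\al_n = \tilde\al_n$ yield immediately $M(\la) \equiv \tilde M(\la)$.

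Define the $2 \times 2$ matrix
$$
P(x,\la) := \begin{pmatrix} \vv(x,\la) & \Phi(x,\la) \\ \vv^{[1]}(x,\la) & \Phi^{[1]}(x,\la) \end{pmatrix} \begin{pmatrix} \tilde\vv(x,\la) & \tilde\Phi(x,\la) \\ \tilde\vv^{[1]}(x,\la) & \tilde\Phi^{[1]}(x,\la) \end{pmatrix}^{-1},
$$
which is well-defined since the second factor has determinant $\langle \tilde\vv, \tilde\Phi \rangle \equiv 1$ by \eqref{wron}. Writing the entries explicitly, e.g.\ $P_{11} = \vv\tilde\Phi^{[1]} - \Phi\tilde\vv^{[1]}$ and $P_{12} = \Phi\tilde\vv - \vv\tilde\Phi$, and substituting $\Phi = S + M\vv$, $\tilde\Phi = \tilde S + \tilde M\tilde\vv$ from \eqref{Phi2}, every term involving $M$ cancels against its tilded counterpart thanks to $M \equiv \tilde M$. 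The remaining expressions contain only $\vv, S, \vv^{[1]}, S^{[1]}$ and their tilded analogues, all entire in $\la$; hence each $P_{ij}(x,\cdot)$ is entire.

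Next, combine \eqref{asymptphi}, \eqref{asymptpsi}, \eqref{Phi1}, and the lower bound \eqref{Dbelow} to get, for $\rho \in G_\de$ and $|\rho|$ large, the growth estimates $|\vv|, |\tilde\vv| = O(e^{|\tau|x})$, $|\vv^{[1]}|, |\tilde\vv^{[1]}| = O(|\rho| e^{|\tau|x})$, $|\Phi|, |\tilde\Phi| = O(|\rho|^{-1} e^{-|\tau|x})$, and $|\Phi^{[1]}|, |\tilde\Phi^{[1]}| = O(e^{-|\tau|x})$. Direct substitution yields $P_{11}, P_{22} = 1 + o(1)$ and $P_{12} = O(|\rho|^{-1})$; in $P_{21} = \vv^{[1]}\tilde\Phi^{[1]} - \Phi^{[1]}\tilde\vv^{[1]}$ the leading $|\rho|$-contributions cancel, since both main terms reduce to $-\rho\sin\rho x \cdot (\cos\rho x - \cot\rho\pi\sin\rho x)$, giving $P_{21} = o(1)$. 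Because the entire $P_{ij}$ are bounded on $G_\de$, the maximum principle extends the bounds across the exceptional small disks around integers, so Liouville's theorem forces $P \equiv I$.

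From $P \equiv I$ one reads off $\vv \equiv \tilde\vv$, $\vv^{[1]} \equiv \tilde\vv^{[1]}$, $\Phi \equiv \tilde\Phi$, $\Phi^{[1]} \equiv \tilde\Phi^{[1]}$. Differentiating $\vv = \tilde\vv$ in $x$ gives $\vv' = \tilde\vv'$ a.e., whence $\vv^{[1]} - \tilde\vv^{[1]} = (\tilde\sigma - \sigma)\vv$; together with $\vv^{[1]} = \tilde\vv^{[1]}$ this forces $(\sigma - \tilde\sigma)(x)\,\vv(x,\la) = 0$. Specialising to $\la = 0$, the function $\vv(\cdot,0)$ does not vanish on any interval (else the first-order quasi-derivative system would give $\vv \equiv 0$, contradicting $\vv(0,0) = 1$), so $\sigma = \tilde\sigma$ a.e.\ in $(0, \pi)$. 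Plugging $\Phi = \tilde\Phi$ and $\Phi^{[1]} = \tilde\Phi^{[1]}$ into the boundary condition at $x = \pi$ from \eqref{bc} gives $(H - \tilde H)\Phi(\pi,\la) = 0$, and since $\Phi(\pi,\la) = -1/\Delta(\la) \not\equiv 0$, we conclude $H = \tilde H$. The main obstacle I anticipate is the estimate in Step~3: verifying in the singular setting that the leading $O(|\rho|)$ terms of $P_{21}$ genuinely cancel, which requires the full precision of \eqref{transN} including the continuous remainder $\mathscr C(x)$ peculiar to the $W_2^{-1}$-class.
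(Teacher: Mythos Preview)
Your overall strategy is exactly the paper's, and the part using $P_{11}$ and $P_{12}$ is fine; but there is a genuine gap in your treatment of $P_{21}$, and this is precisely the obstacle you flagged yourself.

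From the asymptotics \eqref{asymptphi}, \eqref{asymptpsi}, \eqref{Dbelow} one gets, for $\rho\in G_\de$,
\[
\vv^{[1]},\tilde\vv^{[1]}=-\rho\sin\rho x+o(\rho e^{|\tau|x}),\qquad
\Phi^{[1]},\tilde\Phi^{[1]}=\frac{\sin\rho(\pi-x)}{\sin\rho\pi}+o(e^{-|\tau|x}).
\]
Cancelling the common leading products in $P_{21}=\vv^{[1]}\tilde\Phi^{[1]}-\Phi^{[1]}\tilde\vv^{[1]}$ leaves cross terms each of size $O(\rho e^{|\tau|x})\cdot o(e^{-|\tau|x})$ or $O(e^{-|\tau|x})\cdot o(\rho e^{|\tau|x})$, i.e.\ $o(\rho)$, \emph{not} $o(1)$. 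Liouville then only gives that $P_{21}(x,\cdot)$ is a constant $c(x)$, and nothing in your asymptotic argument pins down $c(x)=0$. Consequently your direct reading $\vv^{[1]}\equiv\tilde\vv^{[1]}$ from ``$P\equiv I$'' is not justified, and the quick step ``$(\sigma-\tilde\sigma)\vv=0$'' collapses.

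The paper sidesteps this entirely: it works only with the first row, obtaining $P_{11}\equiv1$, $P_{12}\equiv0$ (your estimates for these are correct), hence $\vv\equiv\tilde\vv$. Then, since $\vv$ solves \emph{both} regularised equations, subtraction gives $((\sigma-\tilde\sigma)\vv)'=(\sigma-\tilde\sigma)\vv'$ a.e.; choosing $\la$ with $\vv(\cdot,\la)$ nowhere zero (take $\la$ sufficiently negative, so that $\vv\sim\cosh|\rho|x$), one concludes $\sigma-\tilde\sigma$ is absolutely continuous with zero derivative, hence constant, and the initial condition $\vv^{[1]}(0,\la)=\tilde\vv^{[1]}(0,\la)=0$ forces the constant to vanish. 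Note this also fixes the minor slip in your final step: specialising to $\la=0$ and invoking ``no vanishing on intervals'' is weaker than needed --- you want $\vv(\cdot,\la)\ne0$ everywhere, which is immediate for suitable $\la$ but not for $\la=0$.
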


\begin{proof}
Introduce the matrix of spectral mappings $P(x, \la) = [P_{jk}(x, \la)]_{j, k = 1, 2}$ as follows:
\begin{equation} \label{defP}
P(x, \la) \begin{bmatrix}
            \tilde \vv(x, \la) & \tilde \Phi(x, \la) \\
            \tilde \vv^{[1]}(x, \la) & \tilde \Phi^{[1]}(x, \la)
          \end{bmatrix} =
          \begin{bmatrix}
             \vv(x, \la) & \Phi(x, \la) \\
             \vv^{[1]}(x, \la) & \Phi^{[1]}(x, \la).
          \end{bmatrix}
\end{equation}
Using \eqref{defP} and \eqref{wron}, we derive the relations (the arguments $(x, \la)$ are omitted for brevity):
\begin{equation} \label{Pjk}
    P_{11} = \vv \tilde \Phi^{[1]} - \Phi \tilde \vv^{[1]}, \quad
    P_{12} = \Phi \tilde \vv - \vv \tilde \Phi.
\end{equation}
Consequently,
\begin{equation} \label{sm1}
    P_{11} = 1 + \vv (\tilde \Phi^{[1]} - \Phi^{[1]}) - \Phi (\tilde \vv^{[1]} - \vv^{[1]}), \quad
    P_{12} = \Phi(\tilde \vv - \vv) - \vv (\tilde \Phi - \Phi).
\end{equation}
It follows from \eqref{asymptphi}-\eqref{Dbelow} and~\eqref{Phi1} that, for each fixed $x \in [0, \pi]$,
\begin{gather*}
\left.
\begin{array}{c}
\vv(x, \la) = O(\exp(|\tau|x)), \quad \tilde \vv(x, \la) - \vv(x, \la) = o(\exp(|\tau|x)) \\
\tilde \vv^{[1]}(x, \la) - \vv^{[1]}(x, \la) = o(\rho\exp(|\tau|x))
\end{array}\right\} \quad \rho \in \mathbb C,\\
\left.
\begin{array}{c}
\Phi(x, \la) = O(\rho^{-1} \exp(-|\tau|x)),  \quad
\tilde \Phi(x, \la) - \Phi(x, \la) = o(\rho^{-1}\exp(-|\tau|x)) \\
\tilde \Phi^{[1]}(x, \la) - \Phi^{[1]}(x, \la) = o(\exp(-|\tau|x))
\end{array} \right\} \quad \rho \in G_{\de},
\end{gather*}
as $|\rho| \to \iy$ uniformly by $\arg \rho$.
Substituting these estimates into~\eqref{sm1}, we obtain
\begin{equation} \label{estP}
P_{1j}(x, \la) - \de_{1j} = o(1), \quad \rho \in G_{\de}, \quad |\rho| \to \iy, 
\end{equation}
where $j = 1, 2$, $\de_{jk}$ is the Kronecker delta.

Furthermore, according to~\eqref{Pjk} and~\eqref{Phi2}, we get
\begin{align*}
& P_{11} = \vv \tilde S^{[1]} - S \tilde \vv^{[1]} + (\tilde M - M) \vv \tilde \vv^{[1]}, \\
& P_{12} = S \tilde \vv - \vv \tilde S + (M - \tilde M) \vv \tilde \vv.
\end{align*}

In view of~\eqref{seriesM}, the conditions of the theorem imply $M(\la) \equiv \tilde M(\la)$. Consequently, the functions $P_{1j}(x, \la)$ are entire in $\la$-plane for each fixed $x \in [0, \pi]$, $j = 1, 2$. Using~\eqref{estP} and Liouville's Theorem, we conclude that $P_{11}(x, \la) \equiv 1$, $P_{12}(x, \la) \equiv 0$. Then the relation~\eqref{defP} yields $\vv(x, \la) \equiv \tilde \vv(x, \la)$.

Subtracting the relations
$$
\begin{array}{l}
-(\vv' - \sigma \vv)' - \sigma (\vv' - \sigma \vv) - \sigma^2 \vv = \la \vv, \\
-(\vv' - \tilde \sigma \vv)' - \tilde \sigma (\vv' - \tilde \sigma \vv) - \tilde \sigma^2 \vv = \la \vv, \\
\end{array}
$$
we obtain the relation
$$
((\sigma - \tilde \sigma) \vv)' = (\sigma - \tilde \sigma) \vv',
$$
which holds a.e. on $(0, \pi)$. In addition, the function $(\sigma - \tilde \sigma) \vv$ is absolutely continuous on $[0, \pi]$. Fix such $\la$ that $\vv(x, \la) \ne 0$, $x \in [0, \pi]$. Then it is clear that the function $(\sigma - \tilde \sigma)$ is absolutely continuous on $[0, \pi]$ and $(\sigma - \tilde \sigma)' = 0$ a.e. on $(0, \pi)$. Thus $\sigma - \tilde \sigma \equiv C$. Using the boundary conditions $\vv^{[1]}(0, \la) = 0$ and $\tilde \vv^{[1]}(0, \la) = 0$, we conclude that $\sigma(0) - \tilde \sigma(0) = 0$, so $\sigma(x) = \tilde \sigma(x)$ a.e. on $(0, \pi)$. Hence the quasi-derivatives $y^{[1]}$ coincide for $L$ and $\tilde L$. Comparing the boundary conditions of $L$ and $\tilde L$ at $x = \pi$, we conclude that $H = \tilde H$.
\end{proof}

\section{Main equation in a Banach space}

In this section, Inverse Problem~\ref{ip:1} is reduced to a linear equation in a Banach space. First, we use the contour integration in the $\la$-plane to derive an infinite system of linear equations (see Lemma~\ref{lem:cont}). Second, a special Banach space is introduced, and the system is rewritten as the so-called main equation in that space. Third, we investigate the properties of the operator participating in the main equation. Those properties play an important role in the next section.

Suppose that we have two boundary value problems $L = L(\sigma, H)$ and $\tilde L = L(\tilde \sigma, \tilde H)$. Everywhere below, we assume that $\tilde \sigma(x) = H = 0$, $x \in [0, \pi]$. Define the function
\begin{equation} \label{defD}
\tilde D(x, \la, \mu) := \frac{\langle \tilde \vv(x, \la), \tilde \vv(x, \mu) \rangle}{\la - \mu} = \int_0^x \tilde \vv(t, \la) \tilde \vv(x, \mu) \, dt.
\end{equation}

Introduce the notations
\begin{gather*}
\la_{n0} := \la_n, \quad \la_{n1} := \tilde \la_n, \quad \rho_{n0} := \rho_n, \quad \rho_{n1} := \tilde \rho_n, \quad \al_{n0} := \al_n, \quad \al_{n1} := \tilde \al_n, \\
\vv_{ni}(x) := \vv(x, \la_{ni}), \quad \tilde \vv_{ni}(x) := \tilde \vv(x, \la_{ni}), \quad i = 0, 1, \\
\xi_n := |\rho_n - \tilde \rho_n| + |\al_n - \tilde \al_n|, \quad n \ge 0.
\end{gather*}

In \cite[Section~1.6.1]{FY01}, the following estimates have been obtained:
\begin{gather} \label{estD1}
    |\tilde D(x, \la, \la_{kj})| \le \frac{C \exp(|\tau| x)}{|\rho - k| + 1}, \quad |\tilde D(x, \la, \la_{k0}) - \tilde D(x, \la, \la_{k1})| \le \frac{C \exp(|\tau| x) \xi_k}{|\rho - k| + 1}, \\
    \label{estD2}
    |\tilde D(x, \la_{ni}, \la_{kj})| \le \frac{C}{|n - k| + 1}, \quad |\tilde D(x, \la_{ni}, \la_{k0}) - \tilde D(x, \la_{ni}, \la_{k1})| \le \frac{C \xi_k}{|n - k| + 1},
\end{gather}
where $x \in [0, \pi]$, $\mbox{Re}\, \rho \ge 0$, $n, k \ge 0$, $i, j = 0, 1$.

\begin{lem} \label{lem:cont}
The following relation holds
\begin{equation} \label{relphi}
    \tilde \vv(x, \la) = \vv(x, \la) + \sum_{k = 0}^{\iy} (\al_{k0} \tilde D(x, \la, \la_{k0}) \vv_{k0}(x) - \al_{k1} \tilde D(x, \la, \la_{k1}) \vv_{k1}(x))
\end{equation}
where the series converges absolutely and uniformly by $x \in [0, \pi]$ and by $\la$ on any compact set.
\end{lem}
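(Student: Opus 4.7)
The plan is to apply Yurko's Cauchy-Poincar\'e contour integration in the $\mu$-plane. The key is to construct a contour integral whose residues at the points $\{\la_{k0}\}$ and $\{\la_{k1}\}$ reproduce the series in~\eqref{relphi}, and whose value in the limit of expanding contour equals $\tilde \vv(x, \la) - \vv(x, \la)$. I would introduce
\[
\mathcal I_N(x, \la) := \frac{1}{2\pi i} \oint_{\ga_N} \hat M(\mu)\, \vv(x, \mu)\, \tilde D(x, \la, \mu) \, d\mu, \qquad \hat M(\mu) := M(\mu) - \tilde M(\mu),
\]
where $\ga_N := \{\mu : |\mu| = (N + \tfrac12)^2\}$ is a circle on which $\sqrt \mu \in G_{1/2}$, so that the lower bound~\eqref{Dbelow} applies simultaneously to $\Delta$ and $\tilde \Delta$. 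For $N$ large, Lemma~\ref{lem:asympt} guarantees that $\ga_N$ encloses precisely the points $\la_{k0}$ and $\la_{k1}$ with $k \le N$.

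For fixed $x$ and $\la$, both $\vv(x, \mu)$ and $\tilde D(x, \la, \mu)$ are entire in $\mu$: the latter because the numerator $\langle \tilde \vv(x, \la), \tilde \vv(x, \mu) \rangle$ in~\eqref{defD} vanishes at $\mu = \la$, removing the apparent singularity of the ratio. Hence the integrand's only poles inside $\ga_N$ are the simple poles of $\hat M$: at $\la_{k0}$ with residue $\al_{k0}$ (from $M$, by~\eqref{resM}) and at $\la_{k1}$ with residue $-\al_{k1}$ (from $-\tilde M$). By the residue theorem, $\mathcal I_N(x, \la)$ equals the $N$th partial sum of the series in~\eqref{relphi}. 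To identify the limit, I would use~\eqref{Phi2} to write $M \vv = \Phi - S$ and then $\tilde M \vv = \tilde M \tilde \vv + \tilde M(\vv - \tilde \vv) = \tilde \Phi - \tilde S + \tilde M(\vv - \tilde \vv)$, splitting the integrand into three pieces: an entire $(S - \tilde S)\tilde D$-part whose contour integral vanishes; a $(\Phi - \tilde \Phi)\tilde D$-part, to which~\eqref{Phi1} together with the asymptotics~\eqref{asymptphi}, \eqref{asymptpsi}, \eqref{asymptDelta} and the lower bound~\eqref{Dbelow} are applied to extract the leading contribution $\tilde \vv(x, \la) - \vv(x, \la)$ via a Cauchy-type evaluation; and a remainder $-\tilde M(\vv - \tilde \vv)\tilde D$, to be shown $o(1)$ using the smallness of $\vv - \tilde \vv$ at high frequency.

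The main technical obstacle is this last identification: on $\ga_N$ the factors $\vv(x, \mu)$ and $\tilde D(x, \la, \mu)$ grow like $e^{|\tau| x}$ and $\rho^{-1} e^{|\tau| x}$ respectively, so all decay must be extracted from $\hat M(\mu)$, which by~\eqref{seriesM} and Lemma~\ref{lem:asympt} is only $o(\rho^{-1})$ uniformly on $\ga_N$. Isolating the correct diagonal contribution from the oscillatory cosine/sine integrals and showing that the off-diagonal pieces vanish in the limit requires a careful splitting based on the Paley-Wiener estimate $\varkappa_a(\rho) = o(\exp(|\tau| a))$ recorded in the preliminaries. Once the limit is established, absolute and uniform convergence of the series in~\eqref{relphi} on $x \in [0, \pi]$ and on compact subsets of the $\la$-plane follows from~\eqref{estD1}--\eqref{estD2}, the uniform bound $|\vv_{ki}(x)| \le C$ derived from~\eqref{asymptphi}, and the $\ell_2$-summability of $\{\xi_k\}$ encoded in Lemma~\ref{lem:asympt}.
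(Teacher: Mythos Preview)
There is a genuine gap in your identification of the limit. Your splitting
\[
\hat M\,\vv \;=\; (\Phi-\tilde\Phi)\;-\;(S-\tilde S)\;-\;\tilde M(\vv-\tilde\vv)
\]
misassigns the roles of the pieces. You claim that the $(\Phi-\tilde\Phi)\tilde D$ piece produces the main term $\tilde\vv(x,\la)-\vv(x,\la)$ ``via a Cauchy-type evaluation'', but $\tilde D(x,\la,\mu)$ is entire in $\mu$ and has no pole at $\mu=\la$, so there is no Cauchy residue to pick up there. Worse, a direct estimate on $\ga_N$ (where $\sqrt\mu\in G_{1/2}$) using the bounds recorded in Section~3, namely $\Phi-\tilde\Phi=o\bigl(\rho^{-1}e^{-|\tau|x}\bigr)$ and $\tilde D(x,\la,\mu)=O\bigl(\rho^{-1}e^{|\tau|x}\bigr)$ for fixed $\la$, gives $(\Phi-\tilde\Phi)\tilde D=o(\rho^{-2})$, whence
\[
\frac{1}{2\pi i}\oint_{\ga_N}(\Phi-\tilde\Phi)(x,\mu)\,\tilde D(x,\la,\mu)\,d\mu \;=\; o(1),
\]
not $\tilde\vv-\vv$. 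Consequently the entire limit must live in the piece $-\tilde M(\vv-\tilde\vv)\tilde D$, which you dismiss as a remainder; your claim that it is $o(1)$ therefore cannot be right, and indeed on $\ga_N$ this piece carries the uncontrolled factor $e^{2|\tau|x}$ that your own ``main technical obstacle'' paragraph flags.

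The paper's proof circumvents exactly this difficulty by working one level up, with the spectral mapping matrix $P(x,\la)$ from~\eqref{defP}. The entries $P_{1j}-\de_{1j}$ are $o(1)$ on $G_\de$ by~\eqref{estP} because the exponential factors of $\vv$ and $\Phi$ cancel \emph{inside} each matrix entry, before any contour integral is taken. Cauchy's formula is applied to these bounded quantities to obtain~\eqref{sm2}; the remainder $\eps_N$ is an integral of $(P_{1j}-\de_{1j})/(\la-\xi)$ over $\Gamma_N$ and is $o(1)$ by a trivial length estimate. Only after this step does the paper substitute~\eqref{Pjk} and~\eqref{Phi2} to reveal the integrand $\tilde D\,\hat M\,\vv$, but now on the contour $\Upsilon_N$ and with the identity $\tilde\vv=\vv+\text{(integral)}+\eps_N$ already in hand, so the Residue Theorem finishes the argument. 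The missing idea in your sketch is precisely this use of the $P$-matrix to force the exponential cancellation before estimating.
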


\begin{proof}
Consider the contour $\mathscr S(\tau) := (-\iy + i\tau, +\iy + i \tau)$ in the $\rho$-plane, $\tau > 0$. Denote by $\Upsilon$ the contour in the $\la$-plane which the image of $\mathscr S(\tau)$ under the mapping $\la = \rho^2$. Let $\Xi$ be the image of the half-plane $\{ \mbox{Im} \rho > \tau \}$ under this mapping. We choose $\tau > 0$ such that $\la_{nj} \in \Xi$ for all $n \ge 0$, $j = 0, 1$. In view of the asymptotics~\eqref{asymptla}, such $\tau$ always exists.
Define the region
$$
C_N := \{ \la \in \mathbb C \colon |\la| < (N + 1/2)^2 \}, \quad N \in \mathbb N.
$$
Denote by $\Upsilon_N$ and $\Gamma_N$ the boundaries of the regions $\Xi \cap C_N$ and $C_N$, respectively (with the counter-clockwise circuit). 

Repeating the arguments of the proof of \cite[Lemma~1.6.3]{FY01}, we obtain the relation
\begin{equation} \label{sm2}
\vv(x, \la) = \tilde \vv(x, \la) + \frac{1}{2 \pi i} \int_{\Upsilon_N} \frac{\tilde \vv(x, \la) P_{11}(x, \xi) + \tilde \vv^{[1]}(x, \la) P_{12}(x, \xi)}{\la - \xi}\, d\xi + \eps_N(x, \la),
\end{equation}
where
$$
\eps_N(x, \la) = -\frac{1}{2 \pi i} \int_{\Gamma_N} \frac{\tilde \vv(x, \la) (P_{11}(x, \xi) - 1) + \tilde \vv^{[1]}(x, \la) P_{12}(x, \xi)}{\la - \xi} \, d\xi.
$$
Using~\eqref{estP}, we show that
$\lim_{N \to \iy} \eps_N(x, \la) = 0$ uniformly by $x \in [0, \pi]$ and $\la$ on compact sets. Substituting~\eqref{Pjk} into~\eqref{sm2}, we obtain
\begin{align*}
\vv(x, \la) = \tilde \vv(x, \la) + \frac{1}{2 \pi i} \int_{\Upsilon_N} & \Bigl(\vv(x, \la) (\vv(x, \xi) \tilde \Phi^{[1]}(x, \xi) - \Phi(x, \xi) \tilde \vv^{[1]}(x, \xi)) \\ & + \tilde \vv^{[1]}(x, \la) (\Phi(x, \xi) \tilde \vv(x, \xi) - \vv(x, \xi) \tilde \Phi(x, \xi))\Bigr) \frac{d\xi}{\la - \xi} + \eps_N(x, \la)
\end{align*}
Using~\eqref{Phi2} and \eqref{defD}, we derive the relation
$$
\tilde \vv(x, \la) = \vv(x, \la) + \frac{1}{2 \pi i} \int_{\Upsilon_N} \tilde D(x, \la, \xi) (M(\xi) - \tilde M(\xi)) \vv(x, \xi) \, d\xi + \eps_N(x, \la),
$$
because the terms with $S(x, \xi)$ and $\tilde S(x, \xi)$ vanish by Cauchy Theorem. Calculating the integral by Residue Theorem and passing to the limit as $N \to \iy$, we arrive at \eqref{relphi}.
\end{proof}

Define the set of indices $J := \{ (n, i) \colon n \ge 0, \, i = 0, 1 \}$ and the functions
\begin{equation} \label{defR}
\tilde R_{ni, kj}(x) := (-1)^j \al_{kj} \tilde D(x, \la_{ni}, \la_{kj}), \quad (n, i), (k, j) \in J.
\end{equation}
Setting $\la = \la_{ni}$ in~\eqref{relphi}, we obtain the following system of linear equations with respect to $\{ \vv_{ni} \}_{(n, i) \in J}$:
\begin{equation} \label{main}
\tilde \vv_{ni}(x) = \vv_{ni}(x) + \sum_{k = 0}^{\iy} \sum_{j = 0}^1 \tilde R_{ni, kj}(x) \vv_{kj}(x), \quad (n, i) \in J.
\end{equation}
Note that the series in \eqref{main} converges only in the sense $\lim_{N \to \iy} \sum_{k = 0}^N (\dots)$. Therefore, further we transform~\eqref{main} into a linear equation in a special Banach space.

Let $B$ be the Banach space of infinite sequences in the form $f = \{ f_{ni} \}_{(n, i) \in J}$, satisfying the conditions: 

\smallskip

(i) If $\la_{n0} = \la_{n1}$, then $f_{n0} = f_{n1}$.

\smallskip

(ii) $\| f \|_B := \sup_{n \ge 0} \max \{ |f_{n0}|, \chi_n |f_{n0} - f_{n1}| \} < \iy$, where 
$$
\chi_n = \begin{cases}
            |\rho_{n0} - \rho_{n1}|^{-1}, \quad \text{if} \, \rho_{n0} \ne \rho_{n1}, \\
            0, \quad \text{otherwise}.
        \end{cases}
$$

\smallskip

For simplicity, we assume that $\la_{n0} \ne \la_{k1}$ for $n \ne k$. One can easily achieve this condition by a shift of the spectrum.

In view of the asymptotic formulas~\eqref{asymptla} and~\eqref{asymptphi}, for each fixed $x \in [0, \pi]$, the sequences $\phi(x) := \{ \vv_{ni}(x)\}_{(n, i) \in J}$ and $\tilde \phi(x) := \{ \tilde \vv_{ni}(x)\}_{(n, i) \in J}$ belong to $B$. For each fixed $x \in [0, \pi]$, we define the linear operator $\tilde R(x) \colon B \to B$, acting on an element $f \in B$ by the following rule:
\begin{equation} \label{opR}
(\tilde R(x) f)_{ni} := \sum_{k = 0}^{\iy} \sum_{j = 0}^1 \tilde R_{ni, kj}(x) f_{kj}, \quad (n, i) \in J.
\end{equation}

\begin{lem} \label{lem:RB}
For each fixed $x \in [0, \pi]$, the operator $\tilde R(x)$ is bounded and can be approximated by finite-dimensional operators with respect to the operator norm $\| . \|_{B \to B}$.
\end{lem}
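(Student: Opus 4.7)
The proof will follow the template of the classical Sturm--Liouville case \cite[Section~1.6.2]{FY01}, the main work being to verify that the kernel estimates \eqref{estD1}--\eqref{estD2} and some refinements carry over. First I would record two consequences that are not explicitly stated. Since $\tilde D(x,\la,\mu)$ is symmetric in $(\la,\mu)$ (from $\langle y,z\rangle=-\langle z,y\rangle$ applied in \eqref{defD}), the inequality \eqref{estD2} with the roles of $n$ and $k$ interchanged yields
\begin{equation*}
|\tilde D(x,\la_{n0},\la_{kj})-\tilde D(x,\la_{n1},\la_{kj})|\le\frac{C|\rho_{n0}-\rho_{n1}|}{|n-k|+1},
\end{equation*}
and an analogous argument gives the mixed second-difference bound
\begin{equation*}
\bigl|\tilde D(x,\la_{n0},\la_{k0})-\tilde D(x,\la_{n0},\la_{k1})-\tilde D(x,\la_{n1},\la_{k0})+\tilde D(x,\la_{n1},\la_{k1})\bigr|\le\frac{C|\rho_{n0}-\rho_{n1}||\rho_{k0}-\rho_{k1}|}{|n-k|+1},
\end{equation*}
both following from the explicit form $\tilde\vv(t,\la)=\cos\rho t$. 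Also, Lemma~\ref{lem:asympt} applied to both $L$ and $\tilde L$ gives $\{\xi_n\}\in l_2$.

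The algebraic trick is to exploit condition (i) of $B$ by rewriting \eqref{opR} so that paired indices $(k,0),(k,1)$ supply small factors. Using $f_{k0}=(f_{k0}-f_{k1})+f_{k1}$,
\begin{equation*}
(\tilde R(x)f)_{ni}=\sum_{k=0}^{\iy}\Bigl[\tilde R_{ni,k0}(x)(f_{k0}-f_{k1})+\bigl(\tilde R_{ni,k0}(x)+\tilde R_{ni,k1}(x)\bigr)f_{k1}\Bigr].
\end{equation*}
Expanding
\begin{equation*}
\tilde R_{ni,k0}+\tilde R_{ni,k1}=(\al_{k0}-\al_{k1})\tilde D(x,\la_{ni},\la_{k0})+\al_{k1}\bigl(\tilde D(x,\la_{ni},\la_{k0})-\tilde D(x,\la_{ni},\la_{k1})\bigr)
\end{equation*}
and applying \eqref{estD1}--\eqref{estD2} gives $|\tilde R_{ni,k0}+\tilde R_{ni,k1}|\le C\xi_k/(|n-k|+1)$, while $|\tilde R_{ni,k0}|\le C/(|n-k|+1)$. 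The definition of the norm yields $|f_{k1}|\le C\|f\|_B$ and $|f_{k0}-f_{k1}|\le|\rho_{k0}-\rho_{k1}|\|f\|_B\le\xi_k\|f\|_B$ (with condition (i) handling the case $\rho_{k0}=\rho_{k1}$). The Cauchy--Schwarz inequality against $\{\xi_k\}\in l_2$ then bounds $\sum_k\xi_k/(|n-k|+1)$ uniformly in $n$, producing $|(\tilde R(x)f)_{ni}|\le C\|f\|_B$.

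For the second term in the $B$-norm I would use the same splitting and then difference in $n$: the symmetric estimate above gives $|\tilde R_{n0,k0}-\tilde R_{n1,k0}|\le C|\rho_{n0}-\rho_{n1}|/(|n-k|+1)$, and expanding $(\tilde R_{n0,k0}+\tilde R_{n0,k1})-(\tilde R_{n1,k0}+\tilde R_{n1,k1})$ as in the previous paragraph now invokes the mixed second-difference bound, giving $C\xi_k|\rho_{n0}-\rho_{n1}|/(|n-k|+1)$. The extracted factor $|\rho_{n0}-\rho_{n1}|$ cancels $\chi_n$, and the remaining series is controlled by the same Cauchy--Schwarz argument, so $\chi_n|(\tilde R(x)f)_{n0}-(\tilde R(x)f)_{n1}|\le C\|f\|_B$. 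When $\la_{n0}=\la_{n1}$ one has $\tilde R_{n0,kj}=\tilde R_{n1,kj}$ for all $k,j$, so the output automatically satisfies condition (i) and lies in $B$; boundedness of $\tilde R(x)$ follows.

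For the approximation, let $\tilde R_N(x)$ denote the truncation of \eqref{opR} to $k\le N$. Its range is contained in the linear span of the $2(N+1)$ sequences $\{\tilde R_{ni,kj}(x)\}_{(n,i)\in J}$ with $k\le N$, $j\in\{0,1\}$, each of which lies in $B$ by the kernel estimates above; hence $\tilde R_N(x)$ has rank at most $2(N+1)$. Repeating the boundedness argument verbatim with the sums restricted to $k>N$ gives
\begin{equation*}
\|\tilde R(x)-\tilde R_N(x)\|_{B\to B}\le C\sup_n\sum_{k>N}\frac{\xi_k}{|n-k|+1}\le C\Bigl(\sum_{k>N}\xi_k^2\Bigr)^{1/2}\to 0
\end{equation*}
as $N\to\iy$. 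The only non-routine ingredient in the whole argument is the verification of the symmetric and mixed second-difference refinements of \eqref{estD2}; the remainder is careful bookkeeping around the $(k,0)/(k,1)$ pairing forced by conditions (i) and (ii).
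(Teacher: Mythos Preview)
Your proof is correct and follows essentially the same approach as the paper: the same telescoping of the $k$-sum to extract a factor of $\xi_k$ from each term, the Cauchy--Schwarz bound $\sup_n\sum_k\xi_k/(|n-k|+1)\le C\bigl(\sum_k\xi_k^2\bigr)^{1/2}$, and the same truncation $\tilde R^N(x)$ for the finite-rank approximation. You are more explicit than the paper about the symmetric and mixed second-difference refinements of \eqref{estD2} needed for the bound on $(\tilde R(x)f)_{n0}-(\tilde R(x)f)_{n1}$ (the paper just writes ``analogously''), and you also verify that the output satisfies condition~(i) of $B$, a point the paper passes over.
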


\begin{proof}
Using~\eqref{defR} and~\eqref{opR}, we obtain
\begin{align*}
(\tilde R(x) f)_{ni} & = \sum_{k = 0}^{\iy} (\al_{k0} \tilde D(x, \la_{ni}, \la_{k0}) f_{k0} - \al_{k1} \tilde D(x, \la_{ni}, \la_{k1}) f_{k1}) \\ & =
\sum_{k = 0}^{\iy} \bigl( (\al_{k0} - \al_{k1}) \tilde D(x, \la_{ni}, \la_{k0}) f_{k0} + \al_{k1} (\tilde D(x, \la_{ni}, \la_{k0}) - \tilde D(x, \la_{ni}, \la_{k1})) f_{k0} \\ & + \al_{k1} \tilde D(x, \la_{ni}, \la_{k1}) (f_{k0} - f_{k1}) \bigr). 
\end{align*}
Since $f \in B$, we have
\begin{equation} \label{estf}
    |f_{k0}| \le \| f \|_B, \quad |f_{k0} - f_{k1}| \le \| f \|_B \xi_k, \quad k \ge 0.
\end{equation}
Using \eqref{asymptla}, \eqref{estD2}, and~\eqref{estf}, we conclude that 
\begin{equation} \label{sm3}
|(\tilde R(x) f)_{ni}| \le C \| f \|_B \sum_{k = 0}^{\iy} \frac{\xi_k}{|n - k| + 1}, \quad (n, i) \in J.
\end{equation}
Analogously, one can show that
\begin{equation} \label{sm4}
    |(\tilde R(x) f)_{n0} - (\tilde R(x) f)_{n1}| \le C \| f \|_B |\rho_{n0} - \rho_{n1}| \sum_{k = 0}^{\iy} \frac{\xi_k}{|n - k| + 1}, \quad n \ge 0.
\end{equation}
Combining~\eqref{sm3} and~\eqref{sm4} and using the definition of $\| . \|_B$, we arrive at the important estimate
\begin{equation} \label{estR}
\| \tilde R(x) \|_{B \to B} \le C \sup_{n \ge 0}\sum_{k = 0}^{\iy} \frac{\xi_k}{|n - k| + 1}. 
\end{equation}
Since $\{ \xi_k \} \in l_2$, we get
$$
\| \tilde R(x) \|_{B \to B} \le C  \left( \sum_{k = 0}^{\iy} \xi_k^2 \right)^{1/2}.
$$
Hence, the operator $\tilde R(x)$ is bounded from $B$ to $B$ uniformly by $x$ in $[0, \pi]$.

In addition, the operator $\tilde R(x)$ is approximated by the finite-dimensional operators $\tilde R^N(x)$, $N \to \iy$, acting by the rule:
$$
(\tilde R^N(x) f)_{ni} = \sum_{k = 0}^N \sum_{j = 0}^1 \tilde R_{ni, kj}(x) f_{kj}, \quad f \in B, \quad (n, i) \in J. 
$$
\end{proof}

Note that our definition of the operator $\tilde R(x)$ is slightly different from the definition of the operator $\tilde H(x)$ in \cite[Section~1.6.1]{FY01}. The authors of \cite{FY01} define their operator in the standard space $m$ of bounded sequences but by more complicated formulas than ours. These two approaches are equivalent, but our approach is more convenient for future generalizations to matrix Sturm-Liouville operators.

Thus, for each fixed $x \in [0, \pi]$, the relations~\eqref{main} can be considered as the following linear equation in the Banach space $B$:
\begin{equation} \label{main2}
    (I + \tilde R(x)) \phi(x) = \tilde \phi(x),
\end{equation}
where $I$ is the identity operator in $B$. We call~\eqref{main2} the \textit{main equation} of Inverse Problem~\ref{ip:1}. Using this equation, one can constructively solve the inverse problem. Indeed, $\tilde \phi(x)$ and $\tilde R(x)$ can be constructed by the spectral data $\{ \la_n, \al_n \}_{n = 0}^{\iy}$ and the model problem $\tilde L$. The solution $\phi(x)$ of the main equation is related with $\sigma$ and $H$. We describe the algorithm for solving Inverse Problem~\ref{ip:1} in more detail in Section~5. Now we study some important properties of the operator $\tilde R(x)$.

\begin{lem} \label{lem:Rl2}
For each fixed $x \in [0, \pi]$, the operator $\tilde R(x)$ maps $B$ into $l_2$ and is bounded from $B$ to $l_2$ uniformly by $x \in [0, \pi]$.
\end{lem}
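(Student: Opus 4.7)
The plan is to refine the pointwise estimate from the proof of Lemma~\ref{lem:RB} by representing each entry of $\tilde R(x) f$ as an $L_2$-pairing against $\tilde \vv(\cdot, \la_{ni}) = \cos(\rho_{ni}\,\cdot)$ and applying a Bessel-type inequality for the relevant cosine systems in $L_2(0, \pi)$. Since $\tilde \sigma \equiv 0$, formula~\eqref{defD} gives $\tilde D(x, \la_{ni}, \la_{kj}) = \int_0^x \cos(\rho_{ni} t) \cos(\rho_{kj} t)\, dt$; repeating the three-term splitting of Lemma~\ref{lem:RB} and interchanging sum and integral (absolute convergence via~\eqref{estD2}) yields
\begin{equation*}
(\tilde R(x) f)_{ni} = \int_0^x \cos(\rho_{ni} t) \bigl( h^A(t) + h^B(t) + h^C(t) \bigr)\, dt,
\end{equation*}
where
\begin{align*}
h^A(t) &:= \sum_k (\al_{k0} - \al_{k1}) f_{k0} \cos(\rho_{k0} t), \\
h^B(t) &:= \sum_k \al_{k1} f_{k0} \bigl( \cos(\rho_{k0} t) - \cos(\rho_{k1} t) \bigr), \\
h^C(t) &:= \sum_k \al_{k1} (f_{k0} - f_{k1}) \cos(\rho_{k1} t).
\end{align*}

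The task reduces to proving $\|h^A\|_{L_2}, \|h^B\|_{L_2}, \|h^C\|_{L_2} \le C \|f\|_B$. For $h^A$, the sequence $\{(\al_{k0} - \al_{k1}) f_{k0}\}_k$ is in $l_2$ of norm $\le \|\xi\|_{l_2} \|f\|_B$; writing $\cos(\rho_{k0} t) = \cos(k t) + (\cos(\rho_{k0} t) - \cos(k t))$ and using Parseval for $\{\cos(k t)\}_k$ together with $\|\cos(\rho_{k0} t) - \cos(k t)\|_{L_2(0,\pi)} \le C |\rho_{k0} - k|$ and Cauchy--Schwarz on the perturbation gives the bound. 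The term $h^C$ follows immediately from Parseval for $\{\cos(k t)\}_k$ applied to the $l_2$-sequence $\{\al_{k1}(f_{k0} - f_{k1})\}_k$. The delicate term is $h^B$, where the pointwise bound $|\cos(\rho_{k0} t) - \cos(\rho_{k1} t)| \le C \xi_k t$ only yields $|h^B(t)| \le C \|f\|_B \sum_k \xi_k$, which diverges since $\{\xi_k\} \in l_2 \setminus l_1$. To recover the cancellation I would expand
\begin{equation*}
\cos(\rho_{k0} t) - \cos(\rho_{k1} t) = -\vartheta_k t \sin(k t) - \tfrac{1}{2}(\vartheta_k t)^2 \cos(k t) + O(|\vartheta_k|^3),
\end{equation*}
with $\vartheta_k := \rho_{k0} - \rho_{k1}$ (so $|\vartheta_k| \le \xi_k$), splitting $h^B$ into a leading part $-t \sum_k \al_{k1} f_{k0} \vartheta_k \sin(k t)$ with $l_2$-coefficients of norm $\le C \|f\|_B$, a second-order part with $l_1$-coefficients, and a remainder bounded by $\|f\|_B \|\vartheta\|_{l_2}^3$; each lies in $L_2(0, \pi)$ with norm $\le C \|f\|_B$.

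Finally, Bessel's inequality for $\{\cos(\rho_n t)\}_n$ in $L_2(0, \pi)$ (obtained from Parseval for $\{\cos(n t)\}_n$ by the same perturbation-plus-Cauchy--Schwarz argument, using $\rho_n = n + \varkappa_n$ with $\{\varkappa_n\} \in l_2$ from Lemma~\ref{lem:asympt}) and Parseval for $\{\cos(n t)\}_n$ itself give $\sum_n |(\tilde R(x) f)_{ni}|^2 \le C \|h^A + h^B + h^C\|_{L_2(0,\pi)}^2 \le C \|f\|_B^2$ for each $i \in \{0, 1\}$, whence $\|\tilde R(x) f\|_{l_2} \le C \|f\|_B$ with $C$ uniform in $x \in [0, \pi]$ (all of the above estimates are manifestly uniform in $x$). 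The principal obstacle is the $L_2$-control of $h^B$: the naive pointwise bound fails for $l_2$-data, and one must extract the cancellation in the cosine difference via the trigonometric expansion above.
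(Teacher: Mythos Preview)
Your proof is correct and follows essentially the same route as the paper: write $(\tilde R(x) f)_{ni} = \int_0^x F(t)\cos(\rho_{ni} t)\,dt$ for a function $F \in L_2(0,\pi)$ with $\|F\|_{L_2} \le C\|f\|_B$, then conclude via a Bessel-type bound for the system $\{\cos(\rho_{ni}\,\cdot)\}_n$. The only differences are of presentation: the paper asserts the $L_2$-convergence of $F$ in one line (citing~\eqref{asymptla} and~\eqref{estf}) where you spell out the three-term splitting and the Taylor expansion for $h^B$, and for the final step the paper invokes the Riesz-basis property of $\{\cos(\rho_{ni}\,\cdot)\}$ from~\cite{HV01} rather than proving the needed Bessel inequality by hand via perturbation from $\{\cos(n\,\cdot)\}$ as you do.
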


\begin{proof}
Using \eqref{defD}, \eqref{defR}, and~\eqref{opR}, we derive
\begin{equation} \label{sm5}
g_{ni}(x) := (\tilde R(x) f)_{ni} = \sum_{k = 0}^{\iy} \int_0^x \cos (\rho_{ni} t) (f_{k0} \cos (\rho_{k0} t) - f_{k1} \cos (\rho_{k1} t)) \, dt, \quad (n, i) \in J.
\end{equation}
By virtue of the estimates~\eqref{asymptla} and~\eqref{estf}, the series
$$
F(t) := \sum_{k = 0}^{\iy} (f_{k0} \cos(\rho_{k0} t) - f_{k1} \cos(\rho_{k1} t))
$$
converges in $L_2(0, \pi)$ and 
\begin{equation} \label{estFf}
\| F \|_{L_2(0, \pi)} \le C \| f \|_B.
\end{equation}
Consequently, one can swap the sum and the integral in~\eqref{sm5} and arrive at the relation
\begin{equation} \label{sm6}
g_{ni}(x) = \int_0^x F(t) \cos(\rho_{ni} t) \, dt.
\end{equation}
In view of \eqref{asymptla}, the sequences $\{ \cos (\rho_{ni} t) \}_{n \ge 0}$ for $i = 0, 1$ are Riesz bases in $L_2(0, \pi)$ (see \cite{HV01}). The formula~\eqref{sm6} gives the Fourier coefficients of the functions $F_{[0, x]}(t)$ with respect to the corresponding biorthonormal bases, where
$$
F_{[0, x]}(t) = \begin{cases}
                F(t), \quad t \in (0, x), \\
                0, \quad t \in (x, \pi).
            \end{cases}
$$
Hence, $\{ g_{ni}(x) \}_{(n, i) \in J} \in l_2$ and
$$
\| \{ g_{ni}(x) \} \|_{l_2} \le C \| F_{[0, x]} \|_{L_2(0, \pi)} \le C \| F \|_{L_2(0, \pi)} \le C \| f \|_B.
$$
This yields the claim.
\end{proof}

\begin{lem} \label{lem:Rl1}
Suppose that $f = \{ f_{ni} \}_{(n, i)\in J} \in B$ and $g_{ni}(x) := (\tilde R(x) f)_{ni}$. Then the sequence $\{ (g_{n0} - g_{n1})(x) \}_{n = 0}^{\iy}$ belongs to $l_1$ for each fixed $x \in [0, \pi]$ and
$$
\| \{ (g_{n0} - g_{n1})(x) \} \|_{l_1} \le C \| f \|_{l_2},
$$
where the constant $C$ does not depend on $x$.
\end{lem}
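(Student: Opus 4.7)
The plan is to extend the representation of $g_{ni}(x)$ already established inside the proof of Lemma~\ref{lem:Rl2} by exploiting the closeness of $\rho_{n0}$ and $\rho_{n1}$. Recall from~\eqref{sm6} that $g_{ni}(x) = \int_0^x F(t) \cos(\rho_{ni}t)\, dt$, where $F \in L_2(0,\pi)$ and $\|F\|_{L_2(0,\pi)} \le C \|f\|_{l_2}$; the latter bound is an immediate consequence of Bessel's inequality, since by~\eqref{asymptla} and \cite{HV01} each of $\{\cos(\rho_{k0}\cdot)\}$ and $\{\cos(\rho_{k1}\cdot)\}$ is a Bessel system in $L_2(0,\pi)$.

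Next, I would extract the small factor $(\rho_{n1}-\rho_{n0})$ by setting $\rho_{n,s} := (1-s)\rho_{n0} + s\rho_{n1}$ for $s \in [0,1]$ and using the fundamental theorem of calculus applied to $s \mapsto \cos(\rho_{n,s} t)$:
$$\cos(\rho_{n0}t) - \cos(\rho_{n1}t) = (\rho_{n1}-\rho_{n0}) \int_0^1 t \sin(\rho_{n,s}t) \, ds.$$
Substituting into the formula for $g_{n0}(x) - g_{n1}(x)$ and swapping the order of integration (harmless because everything in sight is bounded) gives
$$g_{n0}(x) - g_{n1}(x) = (\rho_{n1}-\rho_{n0}) \int_0^1 I_{n,s}(x)\, ds, \qquad I_{n,s}(x) := \int_0^x t F(t) \sin(\rho_{n,s}t)\, dt.$$

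From~\eqref{asymptla} we obtain $\rho_{n,s} = n + \varkappa_n$ uniformly for $s \in [0,1]$, so by standard perturbation results for Riesz bases of sines (see \cite{HV01}) the family $\{\sin(\rho_{n,s}t)\}_{n \ge 0}$ is a Bessel sequence in $L_2(0,\pi)$ with a Bessel bound that is uniform in $s \in [0,1]$. Applying Bessel's inequality to the function $t F_{[0,x]}(t) \in L_2(0,\pi)$ yields
$$\sum_{n \ge 0} |I_{n,s}(x)|^2 \le C \|t F_{[0,x]}\|_{L_2(0,\pi)}^2 \le C \|F\|_{L_2(0,\pi)}^2 \le C \|f\|_{l_2}^2.$$

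The proof is then closed by combining the elementary estimate $|\rho_{n0}-\rho_{n1}| \le \xi_n$, the Cauchy--Schwarz inequality, and Fubini's theorem:
$$\sum_{n \ge 0} |g_{n0}(x) - g_{n1}(x)| \le \int_0^1 \Bigl(\sum_{n \ge 0} \xi_n^2\Bigr)^{\!1/2}\! \Bigl(\sum_{n \ge 0} |I_{n,s}(x)|^2\Bigr)^{\!1/2} ds \le C \|f\|_{l_2},$$
with the constant absorbing $\|\{\xi_n\}\|_{l_2}$ and, crucially, independent of $x \in [0,\pi]$. The main technical point that requires care is the uniform-in-$s$ Bessel property of the perturbed sine system $\{\sin(\rho_{n,s}t)\}$, but this reduces to verifying that $\rho_{n,s} - n$ remains an $l_2$-summable sequence uniformly in $s$, which is immediate from the convex interpolation and the asymptotics of both $\rho_{n0}$ and $\rho_{n1}$.
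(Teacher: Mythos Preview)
Your proof is correct and follows the same overall strategy as the paper: start from the representation $g_{ni}(x)=\int_0^x F(t)\cos(\rho_{ni}t)\,dt$ obtained in Lemma~\ref{lem:Rl2}, linearize the difference $\cos(\rho_{n0}t)-\cos(\rho_{n1}t)$, apply a Bessel-type bound for a sine system to the function $t\mapsto tF_{[0,x]}(t)$, and close with Cauchy--Schwarz against the $l_2$-sequence $\{\xi_n\}$. The only genuine difference is in the linearization step. The paper writes (recall $\rho_{n1}=n$ for the model problem $\tilde L=L(0,0)$)
\[
\cos(\rho_{n0}t)-\cos(\rho_{n1}t)=\cos((n+\varkappa_n)t)-\cos(nt)=-\varkappa_n\, t\sin(nt)+O(\varkappa_n^2),
\]
so the main term involves only the standard orthogonal system $\{\sin(nt)\}$ and the remainder is handled by $\sum_n\varkappa_n^2<\infty$. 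Your exact integral formula avoids any remainder term, which is cleaner, but it requires the Bessel bound uniformly over the interpolated family $\{\sin(\rho_{n,s}t)\}_{n\ge 0}$, $s\in[0,1]$; this is true (since $\rho_{n,s}-n=(1-s)\varkappa_n$ has $l_2$-norm bounded uniformly in $s$), but it is a mild extra input compared with the paper's completely elementary use of $\{\sin(nt)\}$. One terminological quibble: the bound $\|F\|_{L_2}\le C\|f\|_{l_2}$ is the upper Riesz (synthesis) bound for the bases $\{\cos(\rho_{ki}\cdot)\}$, not Bessel's inequality, though the conclusion is of course correct.
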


\begin{proof}
Using~\eqref{asymptla}, we obtain the relation
\begin{equation} \label{difcos}
\cos (\rho_{n0} t) - \cos (\rho_{n1} t) = \cos ((n + \varkappa_n)t) - \cos (n t)
= - \varkappa_n t \sin (n t) + O(\varkappa_n^2),
\end{equation}
where the $O$-estimate is uniform with respect to $t \in [0, \pi]$. Substituting~\eqref{difcos} into~\eqref{sm6}, we get
\begin{gather*}
g_{n0}(x) - g_{n1}(x) = \int_0^x F(t) (\cos (\rho_{n0} t) - \cos (\rho_{n1} t)) \, dt = k_n(x) \varkappa_n + r_n(x), \\
k_n(x) := -\int_0^x F(t) t \sin (n t) \, dt, \quad |r_n(x)| \le C \varkappa_n^2 \| F \|_{L_2(0, \pi)}.
\end{gather*}
For the Fourier coefficients $\{ k_n(x) \}$, we have
$$
\left( \sum_{n = 0}^{\iy} |k_n(x)|^2\right)^{1/2} \le C \| F \|_{L_2(0, \pi)}, \quad x \in [0, \pi].
$$
Consequently,
$$
\sum_{n = 0}^{\iy} |g_{n0}(x) - g_{n1}(x)| \le C \| F \|_{L_2(0, \pi)}.
$$
The latter estimate together with~\eqref{estFf} yield the assertion of the lemma.
\end{proof}

\begin{lem} \label{lem:Rcont}
$\tilde R(x)$ is continuous with respect to $x \in [0, \pi]$ in the space of linear bounded operators from $B$ to $B$.
\end{lem}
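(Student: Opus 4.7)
The plan is to exploit the integral representation from the proof of Lemma~\ref{lem:Rl2}. Since $\tilde\sigma \equiv 0$, we have $\tilde\vv(x,\la) = \cos \rho x$, and the computation leading to \eqref{sm6} shows that for any $f \in B$,
$$
(\tilde R(x) f)_{ni} = \int_0^x F(t) \cos(\rho_{ni} t)\, dt, \qquad F(t) := \sum_{k=0}^{\iy}(f_{k0} \cos(\rho_{k0} t) - f_{k1} \cos(\rho_{k1} t)),
$$
with $\| F \|_{L_2(0, \pi)} \le C \| f \|_B$. Consequently, for any $x_1, x_2 \in [0, \pi]$,
$$
((\tilde R(x_1) - \tilde R(x_2)) f)_{ni} = \int_{x_2}^{x_1} F(t) \cos(\rho_{ni} t)\, dt.
$$

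First I would bound the ``sup'' component of the $B$-norm. By the asymptotics \eqref{asymptla}, the imaginary parts of $\rho_{ni}$ are uniformly bounded, so $|\cos(\rho_{ni}t)| \le C$ for $t \in [0,\pi]$ and all $(n,i) \in J$. Cauchy--Schwarz then gives
$$
\sup_{(n,i) \in J}|((\tilde R(x_1) - \tilde R(x_2)) f)_{ni}| \le C \sqrt{|x_1 - x_2|}\, \|F\|_{L_2(0,\pi)} \le C \sqrt{|x_1 - x_2|}\, \|f\|_B.
$$

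Next I would handle the ``difference'' component. For each $n$ with $\rho_{n0} \ne \rho_{n1}$, a direct computation (or the calculation in \eqref{difcos}) yields
$$
|\cos(\rho_{n0} t) - \cos(\rho_{n1} t)| \le C |\rho_{n0} - \rho_{n1}|, \qquad t \in [0,\pi],
$$
uniformly in $n$, so $\chi_n |\cos(\rho_{n0} t) - \cos(\rho_{n1} t)| \le C$. Therefore
$$
\chi_n\bigl|((\tilde R(x_1) - \tilde R(x_2)) f)_{n0} - ((\tilde R(x_1) - \tilde R(x_2)) f)_{n1}\bigr| \le C \int_{\min(x_1,x_2)}^{\max(x_1,x_2)} |F(t)|\, dt \le C \sqrt{|x_1 - x_2|}\, \|f\|_B,
$$
again by Cauchy--Schwarz. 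Taking the supremum over $n$ and combining with the previous estimate gives
$$
\|\tilde R(x_1) - \tilde R(x_2)\|_{B \to B} \le C \sqrt{|x_1 - x_2|},
$$
which proves the continuity (in fact H\"older continuity) of $\tilde R(x)$.

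The only delicate point is making sure that the bounds on $\cos(\rho_{ni} t)$ and on the difference $\cos(\rho_{n0}t) - \cos(\rho_{n1}t)$ are truly uniform in $(n,i) \in J$; this follows from boundedness of $\{\varkappa_n\}$ in \eqref{asymptla}. Everything else reduces to the identity $\int_0^{x_1} - \int_0^{x_2} = \int_{x_2}^{x_1}$ applied inside the integral representation, together with the $L_2$-bound on $F$ already established.
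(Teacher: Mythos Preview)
Your argument is correct and in fact sharper than the paper's: you obtain an explicit H\"older modulus $\|\tilde R(x_1)-\tilde R(x_2)\|_{B\to B}\le C\sqrt{|x_1-x_2|}$, whereas the paper proves only continuity. The two approaches are genuinely different. The paper revisits the proof of Lemma~\ref{lem:RB} and splits the sum over $k$ at a cutoff $N$: the finite part $\sum_{k\le N}$ contributes $C|x-x_0|\sum_{k\le N}\xi_k$ by Lipschitz dependence of each $\tilde D(x,\la_{ni},\la_{kj})$ on $x$, and the tail $\sum_{k>N}$ is bounded uniformly in $x$ by $C(\sum_{k>N}\xi_k^2)^{1/2}$; an $\eps/2$ argument then closes. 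Your route instead exploits the integral representation \eqref{sm6} from Lemma~\ref{lem:Rl2}, reducing everything to $\int_{x_2}^{x_1}F(t)\cos(\rho_{ni}t)\,dt$ with $\|F\|_{L_2}\le C\|f\|_B$, and finishes with Cauchy--Schwarz. This is shorter and quantitative, but it relies on the particular choice $\tilde\sigma\equiv 0$ (so that $\tilde\vv(t,\la)=\cos\rho t$ and the clean formula \eqref{sm6} is available); the paper's splitting argument, by contrast, uses only the abstract estimates \eqref{estD2} and would transfer unchanged to a general model problem $\tilde L$.
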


\begin{proof}
Let $x_0$ and $x$ be arbitrary points in $[0, \pi]$. Following the proof of Lemma~\ref{lem:RB}, we show that
$$
\| \tilde R(x) - \tilde R(x_0) \|_{B \to B} \le C |x - x_0| \sum_{k = 1}^N \xi_k + C \sup_{n \ge 0} \sum_{k = N + 1}^{\iy} \frac{\xi_k}{|n - k| + 1}
$$
for every $N \in \mathbb N$. For the second term, we have
$$
T_N := C \sup_{n \ge 0} \sum_{k = N + 1}^{\iy} \frac{\xi_k}{|n - k| + 1} \le C_1 \left( \sum_{k = N + 1}^{\iy} \xi_k^2 \right)^{1/2}.
$$
Hence, for each fixed $\eps > 0$, one can choose $N \in \mathbb N$ such that $T_N < \frac{\eps}{2}$. Fix this value of $N$ and $x_0 \in [0, \pi]$. Then one can find sufficiently small $\de > 0$ such that, for every $x \in [0, \pi]$ satisfying $|x - x_0| \le \de$, the following estimate holds
$$
C |x - x_0| \sum_{k = 1}^N \xi_k \le \frac{\eps}{2}.
$$
Thus, for each fixed $x_0 \in [0, \pi]$ and $\eps > 0$, there exists $\de > 0$ such that, for every $x \in [0, \pi]$ satisfying $|x - x_0| \le \de$, we have $\| \tilde R(x) - \tilde R(x_0) \|_{B \to B} \le \eps$, i.e., $\tilde R(x)$ is continuous at every $x_0 \in [0, \pi]$.
\end{proof}

\section{Necessary and sufficient conditions}

In this section, we provide necessary and sufficient conditions of solvability for Inverse Problem~\ref{ip:1}. First, we consider the self-adjoint case, when the function $\sigma(x)$ is real-valued and the constant $H$ is real. For this case, we formulate the necessary and sufficient conditions in Theorem~\ref{thm:nsc} and prove the sufficiency part of that theorem. An essential part of our proof is Lemma~\ref{lem:solve}, which asserts the unique solvability of the main equation~\eqref{main2}. Further, we develop the constructive Algorithm~\ref{alg:1} for solving Inverse Problem~\ref{ip:1}. In the end of this section, necessary and sufficient conditions are obtained for the general non-self-adjoint case (see Theorem~\ref{thm:nsc2}).

\begin{thm} \label{thm:nsc}
For numbers $\{ \la_n, \al_n \}_{n = 0}^{\iy}$ to be the spectral data of a boundary value problem $L(\sigma, H)$ in the self-adjoint case (i.e., $\sigma(x)$, $x \in (0, \pi)$, and $H$ are real), it is necessary and sufficient to fulfill the following conditions:

\smallskip

(i) $\la_n$, $\al_n$ are real, $\al_n > 0$ and $\la_n \ne \la_k$ for all $n, k \ge 0$, $n \ge k$.

\smallskip

(ii) The asymptotic formulas~\eqref{asymptla} hold.
\end{thm}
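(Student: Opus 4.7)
Necessity is essentially free from what has already been done. If $\sigma$ and $H$ are real, the boundary value problem $L(\sigma, H)$ is self-adjoint in $L_2(0, \pi)$ (with the quasi-derivative formulation), so its eigenvalues $\la_n$ are real and the eigenfunctions $\vv(\cdot, \la_n)$ may be taken real-valued, forcing $\al_n = \left(\int_0^\pi \vv^2(x, \la_n)\, dx\right)^{-1} > 0$. Distinctness $\la_n \ne \la_k$ for $n \ne k$ is the standing assumption of the paper, and the asymptotic relations~\eqref{asymptla} were already established in Lemma~\ref{lem:asympt} from $\sigma \in L_2(0, \pi)$ alone.

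For sufficiency, my plan is to take as model problem $\tilde L = L(0, 0)$, whose spectral data are $\tilde\rho_n = n$, $\tilde\al_0 = 1/\pi$, $\tilde\al_n = 2/\pi$ for $n \ge 1$, so that condition~(ii) immediately yields $\{\xi_n\} \in l_2$. I would build the sequence $\tilde\phi(x) \in B$ and the operator $\tilde R(x)$ on $B$ via the formulas~\eqref{defR}--\eqref{opR} directly from the prescribed data and $\tilde L$. By Lemma~\ref{lem:RB}, $\tilde R(x)$ is compact, so the Fredholm alternative reduces the invertibility of $I + \tilde R(x)$ to the triviality of its kernel (this will be the content of an auxiliary Lemma~\ref{lem:solve}). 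In the self-adjoint case, the key idea for injectivity is to associate with any $f \in \ker(I + \tilde R(x))$ the $L_2$-function $F(t) = \sum_k (f_{k0} \cos\rho_{k0} t - f_{k1} \cos\rho_{k1} t)$ from the proof of Lemma~\ref{lem:Rl2}, and to exploit the positivity $\al_n > 0$ through a Parseval-type identity for the Riesz bases $\{\cos\rho_{ni} t\}_{n \ge 0}$ to conclude $F \equiv 0$ on $(0, x)$ and hence $f = 0$.

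Once invertibility is secured, set $\phi(x) := (I + \tilde R(x))^{-1} \tilde\phi(x) = \{\vv_{ni}(x)\}_{(n, i) \in J}$ and recover $\sigma, H$ by inverting the transformation relations of Section~2. Guided by~\eqref{relphi}, I would introduce the series
\[
\varepsilon(x) := \sum_{k = 0}^{\iy} \bigl( \al_{k0} \vv_{k0}(x) \tilde\vv_{k0}(x) - \al_{k1} \vv_{k1}(x) \tilde\vv_{k1}(x) \bigr),
\]
define $\sigma$ as an appropriate (distributional) derivative of $\varepsilon$, and read $H$ off the corresponding boundary expression at $x = \pi$. Lemmas~\ref{lem:Rl2} and~\ref{lem:Rl1} provide the $L_2$- and $l_1$-control needed to justify convergence in the appropriate sense and to ensure $\sigma \in L_2(0, \pi)$. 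The final step is to verify that the reconstructed $L(\sigma, H)$ admits $\vv_{ni}$ as $\vv(\cdot, \la_{ni})$, has spectrum exactly $\{\la_n\}$, and produces the given weight numbers; this check proceeds by plugging $\sigma, H$ back into the main equation and reading off the spectral quantities from the residues of the resulting Weyl function.

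The principal obstacle will be Lemma~\ref{lem:solve}: positivity of $\al_n$ is precisely what turns the contour-integration identity into a non-degenerate quadratic form, but transplanting this argument into the Banach space $B$, whose norm couples the indices $i = 0$ and $i = 1$ through the weights $\chi_n$, will require careful handling of the pairs $(f_{n0}, f_{n1})$ when $\rho_{n0} = \rho_{n1}$. A secondary difficulty is showing that the formal reconstruction of $\sigma$ from $\phi$ genuinely lies in $L_2(0, \pi)$ rather than in a weaker space; here the continuity of $x \mapsto \tilde R(x)$ from Lemma~\ref{lem:Rcont}, together with the $l_2$-mapping property of Lemma~\ref{lem:Rl2}, should be decisive.
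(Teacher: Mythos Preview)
Your broad outline matches the paper's, but two steps as stated would not go through. First, $\sigma$ is not a derivative of your $\varepsilon(x)$: in the regular case one has $q = -2\varepsilon'$, so here $\sigma$ is essentially $-2\varepsilon$ itself, and the paper's formula~\eqref{defsi} is exactly this with the termwise subtraction of $\tfrac{1}{2}(\al_{n0}-\al_{n1})$ inserted to make the series converge in $L_2(0,\pi)$. Second, and more critically, the verification you describe---plugging $\sigma, H$ back in and reading the spectral data off the Weyl function---would require knowing that the $\vv_{ni}(x)$ produced by the main equation actually solve $\ell y = \la_{ni} y$ for the reconstructed $\sigma$; proving this directly means differentiating series that converge only in $L_2$, which the singular setting does not tolerate. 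The paper circumvents this entirely by an approximation argument: it truncates the data to $\{\la_n^N, \al_n^N\}$ as in~\eqref{deflaN}, which by the classical regular-potential theory are the spectral data of some $L(\sigma^N, H^N)$; it then shows $\sigma^N \to \sigma$ in $L_2$ and $H^N \to H$ (Lemma~\ref{lem:limN}), and invokes a stability lemma (Lemma~\ref{lem:stab}) to pass the spectral-data identity to the limit. This approximation-plus-stability device is absent from your plan and is the key new idea needed for the singular case.

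Your proposed injectivity argument for Lemma~\ref{lem:solve} via a Parseval identity for the Riesz bases $\{\cos\rho_{ni} t\}$ is also different from the paper and not clearly sufficient: Riesz-basis Parseval gives only two-sided norm equivalence, which does not isolate the positive weights $\al_{n0}$ into a definite quadratic form. The paper instead constructs from a kernel element $\beta$ an entire function $\gamma(x,\la)$ and a meromorphic $\Gamma(x,\la)$, sets $\mathscr B(x,\la) = \overline{\gamma(x,\bar\la)}\,\Gamma(x,\la)$, and uses contour integration over $\Gamma_{N_k}$ to obtain $\sum_n \al_{n0}|\beta_{n0}|^2 = 0$ from the residues; positivity of $\al_{n0}$ forces $\beta_{n0}=0$, and then Liouville's theorem applied to $\gamma/\Delta$ annihilates $\gamma$ and hence all $\beta_{n1}$.
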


The necessity part of Theorem~\ref{thm:nsc} is already known, so we focus on the proof of sufficiency. Let $\{ \la_n, \al_n \}_{n = 0}^{\iy}$ be arbitrary numbers satisfying the conditions (i), (ii) of Theorem~\ref{thm:nsc}. Using $\{ \la_n, \al_n \}_{n = 0}^{\iy}$ and the model problem $\tilde L = L(0, 0)$, we construct the Banach space $B$, the element $\tilde \phi(x) \in B$, and the operator $\tilde R(x) \colon B \to B$ for each $x \in [0, \pi]$, as it was described in the previous section. Consider the equation~\eqref{main2} with respect to the unknown element $\phi(x)$.

\begin{lem} \label{lem:solve}
For each fixed $x \in [0, \pi]$, the operator $\tilde R(x) \colon B \to B$ has a bounded inverse, so the main equation~\eqref{main2} has a unique solution $\tilde \phi(x) \in B$.
\end{lem}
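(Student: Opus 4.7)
The plan is to combine the compactness of $\tilde R(x)$ established in Lemma~\ref{lem:RB} with a Parseval-type identity that exploits the positivity $\al_n > 0$ available in the self-adjoint case. Since $\tilde R(x)$ is a norm limit of finite-rank operators, it is compact on $B$, so $I + \tilde R(x)$ is a Fredholm operator of index zero. Consequently, the existence of a bounded inverse is equivalent to $\ker(I + \tilde R(x)) = \{ 0 \}$, and the task reduces to showing that every $f \in B$ satisfying the homogeneous equation must vanish.

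To analyze the homogeneous equation I would exploit that the model problem is $\tilde L = L(0, 0)$, so $\tilde \vv(t, \la) = \cos \rho t$ and, by~\eqref{defD}, $\tilde D(x, \la_{ni}, \la_{kj}) = \int_0^x \cos(\rho_{ni} t) \cos(\rho_{kj} t) \, dt$. For $f \in \ker(I + \tilde R(x))$, interchanging sum and integral as in the proof of Lemma~\ref{lem:Rl2} puts the equation in the form
$$
f_{ni} = -\int_0^x \cos(\rho_{ni} t)\, F(t)\, dt, \qquad F(t) := \sum_{k = 0}^{\iy} \bigl( \al_{k0} f_{k0} \cos(\rho_{k0} t) - \al_{k1} f_{k1} \cos(\rho_{k1} t) \bigr) \in L_2(0, \pi).
$$
I would then multiply this identity by $(-1)^i \al_{ni} \overline{f_{ni}}$ and sum over $(n, i) \in J$. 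The left-hand side equals $\sum_n \al_{n0} |f_{n0}|^2 - \sum_n \al_{n1} |f_{n1}|^2$ and converges absolutely because the definition of $B$ together with the asymptotics~\eqref{asymptla} yield $|\al_{n0} - \al_{n1}| + |f_{n0} - f_{n1}| = O(\xi_n)$ with $\{ \xi_n \} \in l_2$; after Fubini the right-hand side equals $-\int_0^x |F(t)|^2\, dt$.

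To close the argument, I would combine the preceding identity with the fact that, for the model problem $\tilde L = L(0, 0)$, the system $\{ \sqrt{\al_{n1}}\, \cos(\rho_{n1} t) \}_{n \ge 0}$ is an orthonormal basis of $L_2(0, \pi)$. Parseval's identity applied to the zero-extension of $F |_{(0, x)}$, together with $f_{n1} = -\int_0^x F(t) \cos(\rho_{n1} t)\, dt$, gives $\int_0^x |F(t)|^2 \, dt = \sum_n \al_{n1} |f_{n1}|^2$, which forces $\sum_n \al_{n0} |f_{n0}|^2 = 0$. The self-adjointness hypothesis $\al_{n0} > 0$ then yields $f_{n0} = 0$ for every $n$. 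Feeding this back gives $\int_0^x \cos(\rho_{n0} t)\, F(t)\, dt = -f_{n0} = 0$ for all $n$, and completeness in $L_2(0, \pi)$ of the Riesz basis $\{ \cos(\rho_{n0} t) \}_{n \ge 0}$ (guaranteed by~\eqref{asymptla} as in \cite{HV01}) forces $F \equiv 0$ on $(0, x)$, whence also $f_{n1} = 0$. The main obstacle is the bookkeeping required to justify the two interchanges of summation and integration and the absolute convergence of the bilinear sum after multiplication by $(-1)^i \al_{ni} \overline{f_{ni}}$; both rest on the $l_2$-smallness of $\al_{n0} - \al_{n1}$ and $f_{n0} - f_{n1}$ built into the space $B$ and already exploited in Lemmas~\ref{lem:RB}--\ref{lem:Rl1}.
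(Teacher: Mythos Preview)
Your argument is correct and takes a genuinely different route from the paper's proof. The paper works in the complex $\la$-plane: it constructs the entire function $\gamma(x,\la) = -\sum_k(\al_{k0}\tilde D(x,\la,\la_{k0})\beta_{k0} - \al_{k1}\tilde D(x,\la,\la_{k1})\beta_{k1})$ and the meromorphic function $\mathscr{B}(x,\la) = \overline{\gamma(x,\bar\la)}\,\Gamma(x,\la)$ (with $\Gamma$ built from $\tilde\Phi$ in place of $\tilde\vv$), then integrates $\mathscr B$ over circles $|\la| = (N+\tfrac12)^2$ and uses residue calculus to obtain $\sum_n \al_{n0}|\beta_{n0}|^2 = 0$; the vanishing of $\beta_{n1}$ is then deduced from Liouville's theorem applied to $\gamma/\Delta$. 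Your approach stays entirely on the real side, exploiting that for the specific model $\tilde L = L(0,0)$ the system $\{\sqrt{\al_{n1}}\cos(nt)\}$ is an honest orthonormal basis of $L_2(0,\pi)$, so Parseval for the zero-extension of $F|_{(0,x)}$ gives $\int_0^x|F|^2 = \sum_n\al_{n1}|f_{n1}|^2$ directly; combined with your bilinear identity this yields $\sum_n\al_{n0}|f_{n0}|^2 = 0$, and completeness of the Riesz basis $\{\cos(\rho_{n0}t)\}$ replaces the Liouville step. Your route is shorter and more elementary, but it relies on the special choice of the model problem; the paper's contour-integral method is closer in spirit to the method of spectral mappings and would adapt more readily to a nontrivial model $\tilde L$. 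One small point: the absolute convergence of your bilinear sum needs more than the $B$-norm estimate $|f_{n0}-f_{n1}| = O(\xi_n)$ you cite --- you should first invoke Lemma~\ref{lem:Rl2} on $f = -\tilde R(x)f$ to get $f \in l_2$, after which $\sum_n\al_{n0}|f_{n0}|^2$ and $\sum_n\al_{n1}|f_{n1}|^2$ each converge separately and the rest goes through.
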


\begin{proof}
Let $x \in [0, \pi]$ be fixed. By Lemma~\ref{lem:RB}, the operator $\tilde R(x)$ can be approximated by finite-dimensional operators. Therefore, according to Fredholm's Theorem, it is sufficient to prove that the corresponding homogeneous equation
\begin{equation} \label{homo}
(I + \tilde R(x)) \beta(x) = 0, \quad \beta(x) = \{ \beta_{ni}(x) \}_{(n, i) \in J} \in B,
\end{equation}
has the only solution $\beta(x) = 0$ in $B$. Since $\beta(x) = -\tilde R(x) \beta(x)$, Lemmas~\ref{lem:Rl2} and~\ref{lem:Rl1} imply that 
\begin{equation} \label{beta}
\{ \beta_{ni}(x) \}_{(n, i) \in J} \in l_2, \quad \{ \beta_{n0}(x) - \beta_{n1}(x) \}_{n = 0}^{\iy} \in l_1.
\end{equation}

Define the functions
\begin{gather} \label{defga}
    \gamma(x, \la) := -\sum_{k = 0}^{\iy} (\al_{k0} \tilde D(x, \la, \la_{k0}) \beta_{k0}(x) - \al_{k1} \tilde D(x, \la, \la_{k1}) \beta_{k1}(x)), \\ \label{defGa}
    \Gamma(x, \la) := -\sum_{k = 0}^{\iy} (\al_{k0} \tilde E(x, \la, \la_{k0}) \beta_{k0}(x) - \al_{k1} \tilde E(x, \la, \la_{k1}) \beta_{k1}(x)), \\ \label{defE}
    \tilde E(x, \la, \mu) := \frac{\langle \tilde \Phi(x, \la), \tilde \vv(x, \mu)\rangle}{\la - \mu}, \\ \nonumber
    \mathscr B(x, \la) := \overline{\gamma(x, \overline{\la})} \Gamma(x, \la).
\end{gather}

The function $\ga(x, \la)$ is entire in the $\la$-plane, the functions $\Gamma(x, \la)$ and $\mathscr B(x, \la)$ are meromorphic in $\la$ with the simple poles $\{ \la_{ni} \}$. In view of \eqref{homo}, $\ga(x, \la_{ni}) = \be_{ni}$, $(n, i) \in J$. It is easy to check that
\begin{equation} \label{ResB}
\Res_{\la = \la_{n0}} \mathscr B(x, \la) = \al_{n0} |\be_{n0}(x)|^2, \quad \Res_{\la = \la_{n1}} \mathscr B(x, \la) = 0 \quad (\text{if} \: \la_{n0} \ne \la_{n1}).
\end{equation}

Using~\eqref{defga}, we derive
\begin{align*}
    \ga(x, \la) = & -\sum_{k = 0}^{\iy} (\al_{k0} - \al_{k1}) \tilde D(x, \la, \la_{k0}) \be_{k0}(x) - \sum_{k = 0}^{\iy} \al_{k1} (\tilde D(x, \la, \la_{k0}) - \tilde D(x, \la, \la_{k1})) \be_{k0}(x) \\ & - \sum_{k = 0}^{\iy} \al_{k1} \tilde D(x, \la, \la_{k1}) (\be_{k0}(x) - \be_{k1}(x)).
\end{align*}
Using~\eqref{asymptla}, \eqref{estD1}, and~\eqref{beta}, we get
\begin{equation} \label{estga}
|\ga(x, \la)| \le C(x) \exp(|\tau|x) \sum_{k = 0}^{\iy} \frac{\theta_k}{|\rho - k| + 1}.
\end{equation}
Here and below, $\rho = \sqrt{\la}$, $\mbox{Re}\, \rho \ge 0$, and the notation $\{ \theta_k \}$ is used for various sequences from $l_1$. Analogously, using~\eqref{defGa} and~\eqref{defE}, we obtain the estimate
$$
|\Gamma(x, \la)| \le \frac{C(x)}{|\rho|} \exp(-|\tau|x) \sum_{k = 0}^{\iy} \frac{\theta_k}{|\rho - k| + 1}, \quad \rho \in G_{\de}, \quad |\rho| \ge \rho^*,
$$
for sufficiently large $\rho^*$ and sufficiently small $\de > 0$.
Hence,
$$
|\mathscr B(x, \la)| \le \frac{C(x)}{|\rho|} \left( \sum_{k = 0}^{\iy} \frac{\theta_k}{|\rho - k| + 1}\right)^2 \le \frac{C(x)}{|\rho|} \sum_{k = 0}^{\iy} (\sqrt \theta_k)^2 \cdot \sum_{k = 0}^{\iy} \frac{(\sqrt{\theta_k})^2}{(|\rho - k| + 1)^2}, \quad \rho \in G_{\de}, \: |\rho| \ge \rho^*.
$$
Suppose that $\la \in \Gamma_N$, $\Gamma_N = \{ \la \colon |\la| = (N + 1/2)^2 \}$, $N \in \mathbb N$ is sufficiently large. Then
$$
|\mathscr B(x, \la)| \le \frac{C(x) f_N}{N}, \quad f_N := \sum_{k = 0}^{\iy} \frac{\theta_k}{(N + 1/2 - k)^2}.
$$
Obviously, $\{ f_N \} \in l_1$. This implies
$$
\varliminf_{N \to \iy} \frac{f_N}{1/N} = 0.
$$
Thus there exists a sequence $\{ N_k \}$ such that $\mathscr B(x, \la) = o(N_k^{-2})$ as $k \to \iy$ uniformly by $\la \in \Gamma_{N_k}$. Consequently,
$$
\lim_{k \to \iy} \int_{\Gamma_{N_k}} \mathscr B(x, \la) \, d\la = 0.
$$
Calculating the integral by the Residue Theorem and using~\eqref{ResB}, we arrive at the relation
$$
\lim_{k \to \iy} \sum_{n = 0}^{N_k} \al_{n0} |\be_{n0}(x)|^2 = 0.
$$
Since $\al_{n0} > 0$, we get $\be_{n0}(x) = 0$ for all $n \ge 0$.

Consider the entire function
\begin{equation} \label{prodD}
\Delta(\la) := \pi (\la_0 - \la) \prod_{n = 1}^{\iy} \frac{\la_n - \la}{n^2}.
\end{equation}
It follows from the relation $\ga(x, \la_n) = \beta_{n0}(x) = 0$, $n \ge 0$, that the function $\frac{\ga(x, \la)}{\Delta(\la)}$ is entire. In addition, \eqref{asymptla} and \eqref{prodD} imply the estimate~\eqref{Dbelow}. The estimate~\eqref{estga} yields that $\ga(x, \la) = O(\exp(|\tau|x))$. Consequently,
$$
\frac{\ga(x, \la)}{\Delta(\la)} = O(\rho^{-1}), \quad |\rho| \to \iy.
$$
By virtue of Liouville's Theorem, $\ga(x, \la) \equiv 0$. Hence, $\be_{n1}(x) = \ga(x, \la_{n1}) = 0$, $n \ge 0$. Thus we have shown that the homogeneous equation~\eqref{homo} has the only solution $\be(x) = 0$, so the lemma is proved.
\end{proof}

\begin{lem} \label{lem:psi}
Let $\phi(x) = \{ \vv_{ni}(x)\}_{(n, i) \in J}$ be the solution of the main equation~\eqref{main2}. Then its elements can be represented in the form $\vv_{ni}(x) = \cos (n x) + \psi_{ni}(x)$, $(n, i) \in J$, where the functions $\psi_{ni}(x)$ are continuous on $[0, \pi]$, the sequence $\{ \psi_{ni}(x) \}_{(n, i) \in J}$ belongs to $l_2$ for each fixed $x \in [0, \pi]$, and $\| \{ \psi_{ni}(x)\} \|_{l_2}$ is uniformly bounded for $x \in [0, \pi]$. Moreover, the series
\begin{equation} \label{defTheta}
\Theta(x) := \sum_{n = 0}^{\iy} (\psi_{n0} - \psi_{n1})(x) \cos (n x)
\end{equation}
converges in $L_2(0, \pi)$, and $\{ (\psi_{n0} - \psi_{n1})(\pi) \}_{n = 0}^{\iy} \in l_1$.
\end{lem}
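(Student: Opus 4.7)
The plan is to invert the main equation $(I + \tilde R(x))\phi(x) = \tilde \phi(x)$ and extract the cosine part explicitly. Since $\tilde \sigma \equiv 0$ and $\tilde H = 0$, the model solutions are $\tilde \vv(x,\la) = \cos(\rho x)$, so $\tilde\vv_{n0}(x) = \cos(\rho_{n0}x)$ and $\tilde\vv_{n1}(x) = \cos(nx)$. Thus the main equation yields $\vv_{ni}(x) = \tilde\vv_{ni}(x) - (\tilde R(x)\phi(x))_{ni}$, and defining $\psi_{ni}(x) := \vv_{ni}(x) - \cos(nx)$ I get
$$
\psi_{n1}(x) = -(\tilde R(x)\phi(x))_{n1}, \qquad \psi_{n0}(x) = (\cos(\rho_{n0}x) - \cos(nx)) - (\tilde R(x)\phi(x))_{n0}.
$$
Continuity of each $\psi_{ni}(x)$ on $[0,\pi]$ follows by combining Lemma~\ref{lem:Rcont} (norm-continuity of $\tilde R(x)$) with Lemma~\ref{lem:solve} through the standard resolvent identity, which shows that $(I+\tilde R(x))^{-1}$ is norm-continuous on $[0,\pi]$; hence $\phi(x)$ is continuous in $B$ and compactness yields a uniform bound $\|\phi(x)\|_B \le C$. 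The $l_2$-claim with uniform norm bound is then an immediate consequence of Lemma~\ref{lem:Rl2} (uniform-in-$x$ boundedness of $\tilde R(x)\colon B \to l_2$) applied to $f = \phi(x)$, combined with the elementary estimate $|\cos(\rho_{n0}x) - \cos(nx)| \le \pi|\varkappa_n|$ and $\{\varkappa_n\} \in l_2$ for the diagonal correction.

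For the $L_2(0,\pi)$-convergence of $\Theta(x)$, I split $\psi_{n0} - \psi_{n1}$ into two contributions. The first, $\sum_n(\cos(\rho_{n0}x) - \cos(nx))\cos(nx)$, is handled via the Taylor-type expansion $\cos(\rho_{n0}x) - \cos(nx) = -\varkappa_n x \sin(nx) + O(\varkappa_n^2)$, uniform in $x\in[0,\pi]$: the main term gives $-\tfrac{x}{2}\sum_n \varkappa_n \sin(2nx)$, which converges in $L_2(0,\pi)$ since $\{\varkappa_n\}\in l_2$, while the $O(\varkappa_n^2)$ remainder is absolutely and uniformly summable. For the second contribution, $\sum_n[(\tilde R(x)\phi(x))_{n0} - (\tilde R(x)\phi(x))_{n1}]\cos(nx)$, I use the representation from the proof of Lemma~\ref{lem:Rl2}, namely $(\tilde R(x)\phi(x))_{ni} = \int_0^x F(t)\cos(\rho_{ni}t)\,dt$ with $F\in L_2(0,\pi)$ and $\|F\|_{L_2}\le C\|\phi(x)\|_B \le C$. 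The same expansion then gives $(\tilde R(x)\phi(x))_{n0} - (\tilde R(x)\phi(x))_{n1} = \varkappa_n k_n(x) + O(\varkappa_n^2)$, where $k_n(x) := -\int_0^x F(t)t\sin(nt)\,dt$ are Fourier-type coefficients forming an $l_2$-sequence (uniformly in $x$); Cauchy--Schwarz then places $\{\varkappa_n k_n(x)\}$ in $l_1$, making the series absolutely, and in particular $L_2$, convergent.

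For the $l_1$-claim at $x=\pi$, the diagonal term simplifies considerably: $\cos(\rho_{n0}\pi) - \cos(n\pi) = \cos(n\pi)(\cos(\varkappa_n\pi)-1) - \sin(n\pi)\sin(\varkappa_n\pi) = O(\varkappa_n^2)$ since $\sin(n\pi)=0$, and $\{\varkappa_n^2\}\in l_1$. The remaining sequence $\{(\tilde R(\pi)\phi(\pi))_{n0} - (\tilde R(\pi)\phi(\pi))_{n1}\}$ lies in $l_1$ by a direct application of Lemma~\ref{lem:Rl1} to $f = \phi(\pi)$. The main technical obstacle I anticipate is in the $L_2(0,\pi)$-convergence of $\Theta$: one must feed the representation of $\tilde R(x)\phi(x)$ through the Fourier-coefficient machinery while tracking uniformity in $x$, and keep straight which factors live in $l_1$, which in $l_2$, and which only in $B$, since $\phi(x)$ itself is not an $l_2$-sequence.
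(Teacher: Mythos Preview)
Your proposal is correct and follows essentially the same route as the paper: invert $(I+\tilde R(x))$ via the resolvent identity (using Lemmas~\ref{lem:solve} and~\ref{lem:Rcont}) to get continuity and a uniform $B$-bound on $\phi(x)$; apply Lemma~\ref{lem:Rl2} for the $l_2$-claim; split $\psi_{n0}-\psi_{n1}$ into the ``diagonal'' part $\tilde\psi_{n0}-\tilde\psi_{n1}=\cos(\rho_{n0}x)-\cos(nx)$ treated by the Taylor expansion~\eqref{difcos}, and the $\tilde R(x)\phi(x)$ part treated by Lemma~\ref{lem:Rl1}. The only cosmetic difference is that the paper packages the diagonal part as $\tilde\psi(x)$ and cites Lemma~\ref{lem:Rl1} directly for the $l_1$-bound, whereas you unfold the integral representation from the proof of Lemma~\ref{lem:Rl2} and redo the Fourier-coefficient estimate; the substance is identical.
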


\begin{proof}
By Lemma~\ref{lem:solve}, there exists the operator $\tilde P(x) = (I + \tilde R(x))^{-1}$, bounded for each fixed $x \in [0, \pi]$. In particular, $\| P(x_0) \| < \iy$. Here and below $\| . \| = \| . \|_{B \to B}$.
By Lemma~\ref{lem:Rcont}, $\tilde R(x)$ is continuous at $x = x_0$, so there exists $\de > 0$ such that, for every $x \in [0, \pi]$ satisfying $|x - x_0| \le \de$, the following estimate holds:
$$
\| \tilde R(x_0) - \tilde R(x) \| \le \frac{1}{2 \| P(x_0) \|}.
$$
Using~\cite[Lemma~1.5.1]{FY01}, we obtain that
$$
\tilde P(x) - \tilde P(x_0) = \sum_{k = 1}^{\iy} (\tilde R(x_0) - \tilde R(x))^k (\tilde P(x_0))^{k + 1}.
$$
Consequently,
$$
\| \tilde P(x) - \tilde P(x_0) \| \le 2 \| \tilde P(x_0) \|^2 \| \tilde R(x_0) - \tilde R(x) \| \to 0, \quad x \to x_0.
$$
Thus $\tilde P(x)$ is continuous in $x$ in the space of linear bounded operators from $B$ to $B$, so $\| \tilde P(x) \|_{B \to B}$ is uniformly bounded for $x \in [0, \pi]$. Since $\phi(x) = \tilde P(x) \tilde \phi(x)$, we get that $\phi(x)$ is continuous by $x$ in $B$ and $\| \phi(x) \|_B$ is uniformly bounded for $x \in [0, \pi]$.

Denote
\begin{gather} \label{defpsi}
\psi_{ni}(x) := \vv_{ni}(x) - \cos (n x), \quad
\tilde \psi_{ni}(x) := \tilde \vv_{ni}(x) - \cos (n x), \\ \nonumber
\psi(x) := \{ \psi_{ni}(x) \}_{(n, i) \in J}, \quad
\tilde \psi(x) := \{ \tilde \psi_{ni}(x) \}_{(n, i) \in J}.
\end{gather}
The main equation~\eqref{main2} yields
\begin{equation} \label{relpsi}
\psi(x) = \tilde \psi(x) - \tilde R(x) \phi(x).
\end{equation}
Using the asymptotic formulas~\eqref{asymptla}, we get that $\tilde \psi(x) \in l_2$ and $\| \tilde \psi(x) \|_{l_2}$ is bounded uniformly by $x \in [0, \pi]$. Together with Lemma~\ref{lem:Rl2}, this yields that $\psi(x) \in l_2$ and $\| \psi(x) \|_{l_2}$ is uniformly bounded by $x \in [0, \pi]$.

According to Lemma~\ref{lem:Rl1}, we have
\begin{equation} \label{sm7}
\{ (\tilde R(x) \phi(x))_{n0} - (\tilde R(x) \phi(x))_{n1} \}_{n = 0}^{\iy} \in l_1.
\end{equation}
Using~\eqref{difcos}, we get
$$
\tilde \psi_{n0}(x) - \tilde \psi_{n1}(x) = \cos((n + \varkappa_n) x) - \cos (n x) = -\varkappa_n x \sin (n x) + O(\varkappa_n^2).
$$
Consider the series
\begin{equation} \label{deftTheta}
\tilde \Theta(x) := \sum_{n = 0}^{\iy} (\tilde \psi_{n0} - \tilde \psi_{n1})(x) \cos (n x) = -\frac{x}{2} \sum_{n = 0}^{\iy} \varkappa_n \sin (2 n x) + \sum_{n = 0}^{\iy} O(\varkappa_n^2).
\end{equation}
Obviously, the series $\tilde \Theta(x)$ converges in $L_2(0, \pi)$. Combining \eqref{relpsi}, \eqref{sm7} and~\eqref{deftTheta}, we conclude that the series \eqref{defTheta} converges in $L_2(0, \pi)$ and $\{ (\psi_{n0} - \psi_{n1})(\pi) \}_{n = 0}^{\iy} \in l_1$.
\end{proof}

Construct the function $\sigma(x)$ and the real $H$ as follows:
\begin{gather} \label{defsi}
    \sigma(x) := -2 \sum_{n = 0}^{\iy} \bigl(\al_{n0} \vv_{n0}(x) \tilde \vv_{n0}(x) - \al_{n1} \vv_{n1}(x) \tilde \vv_{n1}(x) - \tfrac{1}{2}(\al_{n0} - \al_{n1})\bigr), \\ \label{defH}
    H := - \sum_{n = 0}^{\iy} (\al_{n0} \vv_{n0}(\pi) \tilde \vv_{n0}(\pi) - \al_{n1} \vv_{n1}(\pi) \tilde \vv_{n1}(\pi) - (\al_{n0} - \al_{n1})).
\end{gather}

\begin{lem} \label{lem:L2}
The series \eqref{defsi} converges in $L_2(0, \pi)$ and the series~\eqref{defH} converges.
\end{lem}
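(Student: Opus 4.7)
The plan is to use the representations $\vv_{ni}(x) = \cos(nx) + \psi_{ni}(x)$ from Lemma~\ref{lem:psi} and $\tilde\vv_{ni}(x) = \cos(nx) + \tilde\psi_{ni}(x)$, expand each product $\vv_{ni}\tilde\vv_{ni}$, and regroup the summands so that every resulting subseries falls into one of the convergence classes already at hand. Using $\cos^2(nx) = \tfrac12(1+\cos(2nx))$, the general term of~\eqref{defsi} rewrites as
\begin{equation*}
\tfrac12(\al_{n0}-\al_{n1})\cos(2nx) + \al_{n0}\bigl[\cos(nx)(\psi_{n0}+\tilde\psi_{n0}) + \psi_{n0}\tilde\psi_{n0}\bigr] - \al_{n1}\bigl[\cos(nx)(\psi_{n1}+\tilde\psi_{n1}) + \psi_{n1}\tilde\psi_{n1}\bigr].
\end{equation*}
Inserting $\pm\al_{n1}(\ldots)$ in the middle bracket, I split this further into a piece carrying the factor $(\al_{n0}-\al_{n1})$ and pieces carrying the differences $(\psi_{n0}-\psi_{n1})$, $(\tilde\psi_{n0}-\tilde\psi_{n1})$, and $(\psi_{n0}\tilde\psi_{n0}-\psi_{n1}\tilde\psi_{n1})$.

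Each resulting series is then handled by a prepared ingredient. The series $\sum_n(\al_{n0}-\al_{n1})\cos(2nx)$ converges in $L_2(0,\pi)$ since $\{\al_{n0}-\al_{n1}\}=\{\varkappa_n\}\in l_2$ by Lemma~\ref{lem:asympt}. The series $\sum_n \al_{n1}(\psi_{n0}-\psi_{n1})(x)\cos(nx)$ equals $\tfrac{2}{\pi}\Theta(x)$ plus a remainder $\sum_n \varkappa_n(\psi_{n0}-\psi_{n1})(x)\cos(nx)$ controlled by Cauchy--Schwarz from $\{\varkappa_n\}\in l_2$ and the uniform $l_2$-bound of $\{\psi_{n0}-\psi_{n1}\}$; Lemma~\ref{lem:psi} then delivers $L_2(0,\pi)$-convergence. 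The analogous $\tilde\psi$-series is treated through $\tilde\Theta(x)$ from~\eqref{deftTheta}. The remaining contributions are products of two $l_2$-sequences multiplied by a bounded factor, hence converge absolutely and uniformly on $[0,\pi]$ by Cauchy--Schwarz and the uniform $l_2$-bounds on $\{\psi_{ni}(x)\}$, $\{\tilde\psi_{ni}(x)\}$.

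For $H$ in~\eqref{defH} I apply the same decomposition at $x=\pi$: since $\cos(n\pi)=\pm 1$, the question becomes absolute convergence of numerical series. All quadratic-in-$l_2$ pieces are then summable by Cauchy--Schwarz. The only nontrivial first-order pieces are $\sum_n \al_{n1}(\psi_{n0}-\psi_{n1})(\pi)\cos(n\pi)$ and its $\tilde\psi$-counterpart, which are handled respectively by the $l_1$-assertion $\{(\psi_{n0}-\psi_{n1})(\pi)\}\in l_1$ of Lemma~\ref{lem:psi} and by the bound $|\tilde\psi_{n0}(\pi)-\tilde\psi_{n1}(\pi)|=O(\varkappa_n^2)$ following from~\eqref{difcos}. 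The piece $(\al_{n0}-\al_{n1})\cos(2n\pi)=\varkappa_n$ belongs to $l_2$ but not a priori to $l_1$; however, the constant term $\tfrac12(\al_{n0}-\al_{n1})$ that is explicitly subtracted in~\eqref{defH} (note the coefficient $1$ rather than $\tfrac12$ there) is tailored precisely to cancel the non-$l_1$ part, which is the reason for the different normalisation in~\eqref{defsi} and~\eqref{defH}. The main obstacle is the careful bookkeeping of the regrouping so that each subseries lands in exactly one of the three categories ($L_2$-Fourier convergence, $l_1$-absolute convergence, or a Cauchy--Schwarz $l_2\times l_2$ product); once this is done, every individual estimate is routine.
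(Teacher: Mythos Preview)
Your proposal is correct and follows essentially the same approach as the paper: substitute $\vv_{ni}=\cos(nx)+\psi_{ni}$ and $\tilde\vv_{ni}=\cos(nx)+\tilde\psi_{ni}$, then regroup so that the pieces are handled respectively by the $L_2$-Fourier series $\sum\varkappa_n\cos(2nx)$, by the $L_2$-convergence of $\Theta$ and $\tilde\Theta$ from Lemma~\ref{lem:psi}, and by Cauchy--Schwarz on the quadratic $l_2\times l_2$ terms; for $H$ the $l_1$-assertion of Lemma~\ref{lem:psi} and the vanishing of $\sin(n\pi)$ in~\eqref{difcos} do the job. The paper's bookkeeping is organized as three explicit sums $Z_1,Z_2,Z_3$ rather than via your add-and-subtract of $\al_{n1}$, but the ingredients and the logic are the same (your observation about the coefficient $1$ versus $\tfrac12$ in~\eqref{defH} is exactly why the paper's decomposition of $H$ has no $Z_1$-type term).
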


\begin{proof}
Substituting~\eqref{defpsi} into~\eqref{defsi}, we represent $\sigma(x)$ in the form
\begin{gather*}
\sigma(x) = -2 (Z_1(x) + Z_2(x) + Z_3(x)), \\
Z_1(x) := \sum_{n = 0}^{\iy} (\al_{n0} - \al_{n1}) \bigl(\cos^2 (nx) - \tfrac{1}{2}\bigr) = \sum_{n = 0}^{\iy} \varkappa_n \cos (2 n x), \\
Z_2(x) := \frac{2}{\pi} \sum_{n = 0}^{\iy} (\psi_{n0}(x) + \tilde \psi_{n0}(x) - \psi_{n1}(x) - \tilde \psi_{n1}(x)) \cos (n x) = \frac{2}{\pi} (\Theta(x) + \tilde \Theta(x)), \\
Z_3(x) := \sum_{n = 0}^{\iy} (\al_{n0} - \al_{n1}) (\psi_{n0}(x) + \tilde \psi_{n0}(x)) \cos (n x) + \sum_{n = 0}^{\iy} (\al_{n0} \psi_{n0}(x) \tilde \psi_{n0}(x) - \al_{n1} \psi_{n1}(x) \tilde \psi_{n1}(x))
\end{gather*}
Obviously, $Z_1(x)$ converges in $L_2(0, \pi)$. The convergence of $Z_2(x)$ in $L_2(0, \pi)$ follows from Lemma~\ref{lem:psi}. Lemma~\ref{lem:psi} also implies that the sequences $\{ \psi_{ni}(x) \}$ and $\{ \tilde \psi_{ni}(x) \}$ belong to $l_2$, and their elements are continuous on $[0, \pi]$.
Therefore the series $Z_3(x)$ converge absolutely and uniformly on $[0, \pi]$. Hence, $\sigma \in L_2(0, \pi)$.

Substituting~\eqref{defpsi} into~\eqref{defH}, we get
\begin{align*}
H & = \frac{2 (-1)^{n + 1}}{\pi} \sum_{n = 0}^{\iy} (\psi_{n0}(\pi) + \tilde \psi_{n0}(\pi) - \psi_{n1}(\pi) - \tilde \psi_{n1}(\pi)) \\ & +
(-1)^{n + 1} \sum_{n = 0}^{\iy} (\al_{n0} - \al_{n1}) (\psi_{n0}(\pi) + \tilde \psi_{n0}(\pi)) - \sum_{n = 0}^{\iy} (\al_{n0} \psi_{n0}(\pi) \tilde \psi_{n0}(\pi) - \al_{n1} \psi_{n1}(\pi) \tilde \psi_{n1}(\pi))
\end{align*}
By virtue of Lemma~\ref{lem:psi}, we have $\psi(\pi) \in l_2$ and $\{ (\psi_{n0} - \psi_{n1})(\pi) \} \in l_1$, and the same relations are valid for $\tilde \psi(\pi)$. Using~\eqref{asymptla} together with Lemma~\ref{lem:psi}, we conclude that the series for $H$ converges.
\end{proof}

Consider the problem $L = L(\sigma, H)$, where $\sigma$ and $H$ are constructed by~\eqref{defsi} and~\eqref{defH}, respectively.

\begin{lem} \label{lem:sd}
The numbers $\{ \la_n, \al_n \}_{n = 0}^{\iy}$ coincide with the spectral data of $L(\sigma, H)$.
\end{lem}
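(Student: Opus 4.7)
The plan is to reconstruct the solution $\vv(x,\la)$ and a Weyl-type solution $\Phi(x,\la)$ of the prospective boundary value problem from the data $\{\vv_{ni}(x)\}$ furnished by the main equation, and then verify in turn that (a) $\vv$ solves $\ell y=\la y$ with the $\sigma$ built by \eqref{defsi}, (b) $\vv$ satisfies the initial conditions $\vv(0,\la)=1$, $\vv^{[1]}(0,\la)=0$, (c) $\{\la_n\}$ is the spectrum of $L(\sigma,H)$, and (d) the corresponding weight numbers are $\al_n$. Together these four items yield the lemma.

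\textbf{Construction.} Following the pattern of \eqref{relphi}, set
\begin{equation*}
\vv(x,\la):=\tilde\vv(x,\la)-\sum_{k=0}^{\infty}\bigl(\al_{k0}\tilde D(x,\la,\la_{k0})\vv_{k0}(x)-\al_{k1}\tilde D(x,\la,\la_{k1})\vv_{k1}(x)\bigr),
\end{equation*}
and the analogous expression with $\tilde E$ from \eqref{defE} in place of $\tilde D$ for $\Phi(x,\la)$; put $M(\la):=\Phi(0,\la)$. Using the estimates \eqref{estD1}, the asymptotics \eqref{asymptla}, and the boundedness of $\phi(x)$ in $B$ from Lemma~\ref{lem:psi}, the series converges absolutely and uniformly on compacts in $\la$, so $\vv(x,\la)$ is entire in $\la$ for each fixed $x\in[0,\pi]$. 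The main equation \eqref{main2} then gives $\vv(x,\la_{ni})=\vv_{ni}(x)$ for every $(n,i)\in J$, and $\Phi(x,\la)$ is meromorphic in $\la$ with simple poles at $\{\la_n\}$. The quasi-derivative $\vv^{[1]}:=\vv'-\sigma\vv$ is then computed by termwise differentiation, exploiting the identity $\partial_x\tilde D(x,\la,\mu)=\tilde\vv(x,\la)\tilde\vv(x,\mu)$, which is immediate from the right-hand side of \eqref{defD} and the fact that $\tilde\sigma\equiv 0$.

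\textbf{Verification of the equation and the spectral structure.} Collecting the derivative terms and separating the contribution proportional to $\tilde\vv(x,\la)$, one sees that the identity $\ell\vv=\la\vv$ reduces exactly to the requirement that the coefficient of $\tilde\vv(x,\la)$ match \eqref{defsi}; this uses the fact that $\tilde\vv$ satisfies $\tilde\ell\tilde\vv=\la\tilde\vv$. The initial conditions $\vv(0,\la)=1$ and $\vv^{[1]}(0,\la)=0$ follow from $\tilde D(0,\la,\mu)=0$ together with the vanishing sum produced by evaluating the differentiated series at $x=0$. An analogous computation at $x=\pi$ using $\Phi$ shows that $\Phi^{[1]}(\pi,\la)+H\Phi(\pi,\la)=0$ is equivalent to the defining relation \eqref{defH} of $H$. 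Setting $\Delta(\la):=\vv^{[1]}(\pi,\la)+H\vv(\pi,\la)$ and invoking the analogue of \eqref{Phi1}, one reads off $\Delta(\la_n)=0$ from the pole structure of $\Phi$, so $\{\la_n\}$ is the spectrum of $L(\sigma,H)$. Finally, the constructed $M(\la)$ equals the series \eqref{seriesM} with residue $\al_n$ at $\la_n$, and matching this against \eqref{aln} together with the Wronskian identity \eqref{wron} identifies $\al_n$ as the weight number $(\int_0^\pi\vv^2(x,\la_n)\,dx)^{-1}$ of $L(\sigma,H)$.

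\textbf{Main obstacle.} The hardest part is the rigorous termwise differentiation of the defining series for $\vv(x,\la)$ in the singular setting: since $\sigma\in L_2$ only, the quasi-derivative $\vv^{[1]}$ is absolutely continuous while $\vv'$ is a priori merely locally integrable, so the computation must be justified either through the Volterra system \eqref{voltphi}--\eqref{voltphi1} or via the transformation operators of Theorem~\ref{thm:trans}, rather than by naive classical manipulations. A second delicate point is the bookkeeping of cancellations: the leading terms produced by $\partial_x$ acting on $\tilde D(x,\la,\la_{kj})\vv_{kj}(x)$ have to be telescoped so that the coefficient of $\tilde\vv(x,\la)$ assembles precisely the series in \eqref{defsi}, while the remainder vanishes by virtue of the main equation \eqref{main2} evaluated at $\la=\la_{ni}$. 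Once the ODE and boundary conditions are confirmed, identification of the spectral data via \eqref{seriesM} and \eqref{resM} is routine.
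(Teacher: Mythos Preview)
Your outline follows the \emph{direct verification} route used in \cite[Lemmas~1.6.7--1.6.9]{FY01} for regular potentials: build $\vv(x,\la)$ and $\Phi(x,\la)$ from the series, differentiate termwise, and check that $\ell\vv=\la\vv$ and the boundary conditions hold. The paper takes a \emph{genuinely different} path. Instead of verifying the differential equation directly, it truncates the data via~\eqref{deflaN}, producing $\{\la_n^N,\al_n^N\}$ that differ from the model data $\{\tilde\la_n,\tilde\al_n\}$ at only finitely many indices. For each fixed $N$ these fall under the regular theory \cite[Theorem~1.6.2]{FY01}, so they are the spectral data of some $L(\sigma^N,H^N)$ with $\sigma^N\in W_1^1(0,\pi)$. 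Two stability lemmas then close the argument: Lemma~\ref{lem:limN} shows $\sigma^N\to\sigma$ in $L_2(0,\pi)$ and $H^N\to H$, while Lemma~\ref{lem:stab} shows that this convergence forces the spectral data of $L(\sigma^N,H^N)$ to converge to those of $L(\sigma,H)$ in the $l_2$ sense~\eqref{stab}. Since the spectral data of $L(\sigma^N,H^N)$ are $\{\la_n^N,\al_n^N\}$ by construction and these also converge to $\{\la_n,\al_n\}$, the identification follows.

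The trade-off is exactly the obstacle you flagged. Your approach requires making rigorous the termwise differentiation of the $\vv$-series and then a \emph{second} differentiation to reach $(\vv^{[1]})'$, all while $\vv_{kj}(x)$ is so far only an abstract element of the Banach-space solution $\phi(x)$ with no a~priori differential structure; in the singular setting $\vv'\notin AC$ in general, so the bookkeeping you describe (isolating the coefficient of $\tilde\vv(x,\la)$ and telescoping the rest via~\eqref{main2}) must be carried out at the level of quasi-derivatives and cannot rely on classical manipulations. This can presumably be done, but it is precisely what the paper's approximation argument is designed to avoid: by pushing all the differential-equation work onto the already-proved regular case and then passing to the limit only in $L_2$ norms, the paper never has to differentiate the reconstruction series at all. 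Your route is more self-contained (it does not import the regular-potential theorem), while the paper's route is technically lighter once that theorem is available.
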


In order to prove Lemma~\ref{lem:sd}, along with $\{ \la_n, \al_n \}_{n = 0}^{\iy}$, we consider the data $\{ \la_n^N, \al_n^N \}_{n = 0}^{\iy}$ defined as follows:
\begin{equation} \label{deflaN}
\la_n^N = \begin{cases}
            \la_n, \quad n \le N, \\
            \tilde \la_n, \quad n > N,
          \end{cases}    \quad
\al_n^N = \begin{cases}
            \al_n, \quad n \le N, \\
            \tilde \al_n, \quad n > N,
          \end{cases}    
\quad N \in \mathbb N.          
\end{equation}

For each fixed $N \in \mathbb N$, the data $\{ \la_n, \al_n \}_{n = 0}^{\iy}$ satisfy necessary and sufficient conditions for being the spectral data of the Sturm-Liouville problem with a regular potential (see \cite[Theorem~1.6.2]{FY01}). Hence, there exist a real-valued function $q^N \in L_2(0, \pi)$ and reals $h^N$, $g^N$ such that $\{ \la_n^N, \al_n^N \}_{n = 0}^{\iy}$ are the spectral data of the boundary value problem
\begin{gather*}
-y'' + q^N(x) y = \la y, \quad x \in (0, \pi), \\
y'(0) - h^N y(0) = 0, \quad y'(\pi) + g^N y(\pi) = 0.
\end{gather*}
The latter problem is equivalent to $L(\sigma^N, H^N)$ with 
\begin{equation} \label{sm8}
\sigma^N(x) := h^N + \int_0^x q^N(t) \, dt, \quad H^N := g^N + \sigma^N(\pi).
\end{equation}
By virtue of \cite[Lemma~1.6.5]{FY01}, the data $q^N$, $h^N$ and $g^N$ can be constructed by the formulas
\begin{gather} \label{sm9}
    \eps_0^N(x) := \sum_{n = 0}^N (\al_{n0} \vv_{n0}^N(x) \tilde \vv_{n0}(x) - \al_{n1} \vv_{n1}^N(x) \tilde \vv_{n1}(x)), \\ \label{sm10}
    q^N(x) = -2 \frac{d}{dx} \eps_0^N(x), \quad h^N = -\eps_0^N(0), \quad g^N = \eps_0^N(\pi).
\end{gather}
Here $\phi^N(x) = \{ \vv_{ni}^N(x) \}_{(n, i) \in J}$ is the solution of the main equation
\begin{equation} \label{mainN}
(I + \tilde R^N(x)) \phi^N(x) = \tilde \phi^N(x),
\end{equation}
analogous to~\eqref{main2}. The operator $\tilde R^N(x)$ and the infinite vector $\tilde \phi^N(x)$ are constructed similarly to $\tilde R(x)$ and $\tilde \phi(x)$, respectively, by using the data $\{ \la_n^N, \al_n^N \}_{n = 0}^{\iy}$ and the model problem $\tilde L = L(0, 0)$. Note that $\tilde \vv_{ni}^N(x) = \tilde \vv_{ni}(x)$ for all $n \le N$, $i = 0, 1$, and $\vv_{n0}^N(x) = \vv_{n1}^N(x)$, $\tilde \vv_{n0}^N(x) = \tilde \vv_{n1}^N(x)$ for all $n > N$.

Using~\eqref{sm8}-\eqref{sm10}, we derive the formulas
\begin{gather} \label{defsiN}
    \sigma^N(x) = -2 \sum_{n = 0}^N \bigl( \al_{n0} \vv_{n0}^N(x) \tilde \vv_{n0}(x) - \al_{n1} \vv_{n1}^N(x) \tilde \vv_{n1}(x) - \tfrac{1}{2}(\al_{n0} - \al_{n1})\bigr), \\ \label{defHN}
    H^N = -\sum_{n = 0}^N (\al_{n0} \vv_{n0}^N(\pi) \tilde \vv_{n0}(\pi) - \al_{n1} \vv_{n1}^N(\pi) \tilde \vv_{n1}(\pi) - (\al_{n0} - \al_{n1})).
\end{gather}

\begin{lem} \label{lem:limN}
$\sigma^N \to \sigma$ in $L_2(0, \pi)$ and $H^N \to H$ as $N \to \iy$, where $\sigma$, $H$, $\sigma^N$, $H^N$ are defined by \eqref{defsi}, \eqref{defH}, \eqref{defsiN}, \eqref{defHN}, respectively.
\end{lem}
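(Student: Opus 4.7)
The plan is to split both $\sigma(x) - \sigma^N(x)$ and $H - H^N$ into a \emph{tail} for $n > N$ and a \emph{head} for $n \le N$. The tail of $\sigma(x) - \sigma^N(x)$ is exactly the remainder of the series~\eqref{defsi}, so it tends to $0$ in $L_2(0, \pi)$ by Lemma~\ref{lem:L2}; likewise the corresponding scalar tail for $H$ at $x = \pi$ tends to $0$ using the $l_1$-property established in Lemma~\ref{lem:psi}. The entire task therefore reduces to controlling the finite head sums involving the differences $\vv_{ni}(x) - \vv_{ni}^N(x)$ for $n \le N$.

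For the head, I would compare the two main equations~\eqref{main2} and~\eqref{mainN}. Since for $n \le N$ the free terms coincide, $\tilde\phi_{ni}^N(x) = \tilde\phi_{ni}(x)$, subtraction gives
\[
(I + \tilde R(x))(\phi(x) - \phi^N(x)) = (\tilde\phi(x) - \tilde\phi^N(x)) + (\tilde R^N(x) - \tilde R(x))\phi^N(x).
\]
The operator $\tilde P(x) := (I + \tilde R(x))^{-1}$ is uniformly bounded in $x \in [0, \pi]$ by Lemmas~\ref{lem:solve} and~\ref{lem:Rcont}, and $\|\phi^N(x)\|_B$ is uniformly bounded in $x$ and $N$ by an argument parallel to Lemma~\ref{lem:psi} applied to the truncated data $\{\la_n^N, \al_n^N\}$. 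Since the two data sets differ only for $n > N$, the estimate of Lemma~\ref{lem:RB} yields $\|\tilde R^N(x) - \tilde R(x)\|_{B \to B} \le C\bigl(\sum_{k > N}\xi_k^2\bigr)^{1/2} \to 0$. The head sum itself is a truncated linear combination of $(\vv_{ni} - \vv_{ni}^N)(x)\cos(\rho_{ni} x)$; using Lemma~\ref{lem:Rl2} together with the Riesz basis property of $\{\cos(\rho_{ni} t)\}_{n \ge 0}$ exploited in its proof, this combination is bounded in $L_2(0, \pi)$ by the $l_2$-norm of the associated coefficient sequence, which in turn is controlled by the operator-difference estimate above and so vanishes as $N \to \iy$. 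The analogous pointwise argument at $x = \pi$, using Lemma~\ref{lem:Rl1} in place of Lemma~\ref{lem:Rl2}, handles $H^N \to H$.

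The main obstacle is that $\tilde\phi^N(x)$ does \emph{not} converge to $\tilde\phi(x)$ in the $B$-norm: for $n > N$ the weight $\chi_n = 1/|\rho_{n0} - \rho_{n1}|$ multiplies the nonvanishing difference $\cos(\rho_n x) - \cos(nx)$ by its exact reciprocal, producing an $O(x)$ quantity that persists as $N \to \iy$. Consequently one cannot deduce $\phi^N \to \phi$ in $B$-norm from naive continuous dependence on the free term. The remedy is to carry out the estimate in the weaker $l_2$ and $l_1$ norms provided by Lemmas~\ref{lem:Rl2} and~\ref{lem:Rl1}, which are precisely the norms dual to the Fourier-coefficient structure of the head sums against the Riesz basis $\{\cos(\rho_{ni} x)\}$; in those norms, the required convergence is driven by the vanishing operator-difference norm, yielding $\sigma^N \to \sigma$ in $L_2(0, \pi)$ and $H^N \to H$.
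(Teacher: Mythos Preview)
Your overall strategy matches the paper's: split into a tail ($n > N$) handled by Lemma~\ref{lem:L2} and a head ($n \le N$) controlled by comparing the two main equations. The obstacle you identify --- that $\tilde\phi^N$ fails to converge to $\tilde\phi$ in the $B$-norm --- is real and is exactly the crux of the argument. However, your proposed remedy does not close the gap. The equation you write, $(I + \tilde R(x))(\phi - \phi^N) = (\tilde\phi - \tilde\phi^N) + (\tilde R^N - \tilde R)\phi^N$, has a right-hand side whose $B$-norm does not tend to zero, and the only inverse available to you is $(I + \tilde R(x))^{-1} \colon B \to B$. Lemmas~\ref{lem:Rl2} and~\ref{lem:Rl1} upgrade $B$-smallness of an element to $l_2$- or $l_1$-smallness of its image under $\tilde R(x)$ \emph{after} an inversion step, not before; without $B$-control of the right-hand side they do not help you invert. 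Thus the claim that ``the required convergence is driven by the vanishing operator-difference norm'' is not substantiated: that vanishing norm controls only one of the two summands on the right, and you have offered no mechanism to absorb the other one.

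The paper's fix is a simple but essential truncation. Since only the components with $n \le N$ enter the head sum, one sets $\theta^N_{ni} := \vv_{ni} - \vv_{ni}^N$ for $n \le N$ and $\theta^N_{ni} := 0$ for $n > N$. Subtracting the two main equations componentwise for $n \le N$ (where indeed $\tilde\vv^N_{ni} = \tilde\vv_{ni}$ and $\tilde R^N_{ni,kj} = \tilde R_{ni,kj}$ for $k \le N$) yields $(I + \tilde R(x))\theta^N(x) = \zeta^N(x)$ with $\zeta^N_{ni} = -\sum_{k>N}\sum_j \tilde R_{ni,kj}(x)\vv_{kj}(x)$ for $n \le N$. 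The point is that this $\zeta^N$ \emph{does} satisfy $\|\zeta^N(x)\|_B \to 0$ uniformly in $x$, by the estimate behind~\eqref{estR} with the $k$-sum restricted to $k > N$. Inverting in $B$ gives $\|\theta^N(x)\|_B \to 0$ uniformly; then Lemmas~\ref{lem:Rl2} and~\ref{lem:Rl1}, applied through the relation $\theta^N = \zeta^N - \tilde R(x)\theta^N$, upgrade this to $\|\theta^N(x)\|_{l_2} \to 0$ and $\sum_{n}|\theta^N_{n0}(x) - \theta^N_{n1}(x)| \to 0$ uniformly in $x$. These are precisely the ingredients needed to show that the head sum $\mathscr S_1^N(x)$ (after splitting as in~\eqref{sm17}) tends to zero uniformly on $[0,\pi]$, and the analogous argument at $x = \pi$ handles $H^N \to H$.
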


\begin{proof}
According to the main equations~\eqref{main2} and~\eqref{mainN}, the following relations hold for $n \le N$, $i = 0, 1$, $x \in [0, \pi]$:
\begin{align*}
    & \vv_{ni}(x) + \sum_{k = 0}^{\iy} \sum_{j = 0}^1 \tilde R_{ni, kj}(x) \vv_{kj}(x) = \tilde \vv_{ni}(x), \\
    & \vv_{ni}^N(x) + \sum_{k = 0}^N \sum_{j = 0}^1 \tilde R_{ni, kj}(x) \vv_{kj}^N(x) = \tilde \vv_{ni}(x).
\end{align*}
Subtraction yields
$$
(\vv_{ni}(x) - \vv_{ni}^N(x)) + \sum_{k = 0}^N \sum_{j = 0}^1 \tilde R_{ni, kj}(x) (\vv_{kj}(x) - \vv_{kj}^N(x)) + \sum_{k = N + 1}^{\iy} \sum_{j = 0}^1 \tilde R_{ni, kj}(x) \vv_{kj}(x) = 0.
$$
Define $\theta^N(x) = \{ \theta_{ni}^N(x)\}_{(n, i) \in J}$ as follows:
$$
\theta_{ni}^N(x) := \begin{cases}
                        \vv_{ni}(x) - \vv_{ni}^N(x), \quad n \le N, \\
                        0, \quad n > N.
                   \end{cases}    
$$
For each fixed $x \in [0, \pi]$ and $N \in \mathbb N$, the sequence $\theta^N(x)$ belongs to $B$ and satisfies the equation
\begin{equation} \label{eqthe}
(I + \tilde R(x)) \theta^N(x) = \zeta^N(x),
\end{equation}
where $\zeta^N(x) = \{ \zeta_{ni}^N(x) \}_{(n, i) \in J} \in B$,
$$
\zeta_{ni}^N(x) := \begin{cases} -\sum_{k = N + 1}^{\iy} \tilde R_{ni, kj}(x) \vv_{kj}(x), \quad n \le N, \\ 0, \quad n > N. \end{cases}
$$
Analogously to the proof of Lemma~\ref{lem:RB}, we obtain the estimate:
$$
\| \zeta^N(x) \|_B \le C \| \vv(x) \|_B \sup_{n \ge 0} \sum_{k = N + 1}^{\iy} \frac{\xi_k}{|n - k| + 1} \to 0, \quad N \to \iy,
$$
uniformly by $x \in [0, \pi]$.

According to Lemmas~\ref{lem:solve} and~\ref{lem:psi}, the operator $(I + \tilde R(x))^{-1}$ exists and is bounded uniformly by $x \in [0, \pi]$. Consequently, the solution $\theta^N(x) = (I + \tilde R(x))^{-1} \zeta^N(x)$ of equation~\eqref{eqthe} tends to zero in $B$ as $N \to \iy$ uniformly by $x \in [0, \pi]$. Using Lemma~\ref{lem:Rl2}, we get $\tilde R(x) \theta^N(x) \in l_2$ and $\| \tilde R(x) \theta^N(x) \|_{l_2} \to 0$, $\| \zeta^N(x) \|_{l_2} \to 0$ as $N \to \iy$ uniformly by $x \in [0, \pi]$. Using~\eqref{eqthe}, we conclude that $\theta^N(x) \in l_2$ and $\| \theta^N(x) \|_{l_2} \to 0$ as $N \to \iy$ uniformly by $x \in [0, \pi]$. Now relying on Lemma~\ref{lem:Rl1} and its proof, we obtain
\begin{equation} \label{limthe}
\lim_{N \to \iy} \sum_{n = 0}^N |\theta_{n0}^N(x) - \theta_{n1}^N(x)| = 0
\end{equation}
uniformly by $x \in [0, \pi]$.

Subtracting~\eqref{defsiN} from \eqref{defsi}, we get
\begin{gather*}
    \sigma(x) - \sigma^N(x) = -2(\mathscr S_1^N(x) + \mathscr S_2^N(x)), \\
    \mathscr S_1^N(x) := \sum_{n = 0}^N (\al_{n0} (\vv_{n0}(x) - \vv_{n0}^N(x)) \tilde \vv_{n0}(x) - \al_{n1}(\vv_{n1}(x) - \vv_{n1}^N(x)) \tilde \vv_{n1}(x)), \\
    \mathscr S_2^N(x) := \sum_{n = N + 1}^{\iy} (\al_{n0} \vv_{n0}(x) \tilde \vv_{n0}(x) - \al_{n1} \vv_{n1}(x) \tilde \vv_{n1}(x)).
\end{gather*}
By Lemma~\ref{lem:L2}, the series for $\sigma(x)$ converges in $L_2(0, \pi)$, so $\| \mathscr S_2^N \|_{L_2} \to 0$ as $N \to \iy$. In order to prove the same for $\mathscr S_1^N$, we derive
\begin{multline} \label{sm17}
\mathscr S_1^N(x) = \sum_{n = 0}^N (\al_{n0} - \al_{n1}) \theta_{n0}^N(x) \tilde \vv_{n0}(x) + \frac{2}{\pi}\sum_{n = 0}^N \theta_{n0}^N(x) (\tilde \vv_{n0}(x) - \tilde \vv_{n1}(x)) \\ + \frac{2}{\pi} \sum_{n = 0}^N (\theta_{n0}^N(x) - \theta_{n1}^N(x)) \cos nx.
\end{multline}
Recall that $\{ \al_{n0} - \al_{n1} \} \in l_2$, $\| \theta^N(x) \|_{l_2} \to 0$ as $N \to \iy$, $\tilde \vv_{n0}(x) = O(1)$, $\{ \tilde \vv_{n0}(x) - \tilde \vv_{n1}(x) \} \in l_2$ and~\eqref{limthe} holds. Therefore, all the three sums in~\eqref{sm17} tend to zero as $N \to \iy$ uniformly by $x \in [0, \pi]$. Thus $\sigma^N \to \sigma$ in $L_2(0, \pi)$ as $N \to \iy$. Analogously, we show that $H^N \to H$ as $N \to \iy$, by subtracting~\eqref{defHN} from~\eqref{defH}.
\end{proof}

\begin{lem} \label{lem:stab}
Suppose that $\sigma$ and $\sigma^N$, $N \in \mathbb N$, are arbitrary functions from $L_2(0, \pi)$ such that $\sigma^N \to \sigma$ in $L_2(0, \pi)$ as $N \to \iy$ and $H$, $H^N$, $N \in \mathbb N$, are arbitrary reals such that $H^N \to H$ as $N \to \iy$. Let $\{ \la_n, \al_n \}_{n = 0}^{\iy}$ and $\{ \la_n^N, \al_n^N \}_{n = 0}^{\iy}$ be the spectral data of the problems $L(\sigma, H)$ and $L(\sigma^N, H^N)$, respectively. Then
\begin{equation} \label{stab}
\lim_{N \to \iy} \sum_{n = 0}^{\iy} (|\rho_n^N - \rho_n| + |\al_n^N - \al_n|)^2 = 0.
\end{equation}
\end{lem}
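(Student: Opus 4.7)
The plan is to translate the convergence $\sigma^N\to\sigma$ in $L_2(0,\pi)$ and $H^N\to H$ into convergence of the characteristic functions $\Delta^N(\lambda)$, $\Psi^N(0,\lambda)$ and $\tfrac{d}{d\lambda}\Delta^N(\lambda)$ in a Paley--Wiener sense, and then to extract from these an $l_2$-estimate on $\{\rho_n^N-\rho_n\}$ and $\{\alpha_n^N-\alpha_n\}$.

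First, since $\|\sigma^N\|_{L_2(0,\pi)}$ is bounded in $N$, the iterative estimates in the proof of Theorem~\ref{thm:trans} yield transformation kernels $\mathscr K^N,\mathscr N^N\in L_2(\mathbb D)$ and $\mathscr C^N\in C[0,\pi]$ with norms bounded uniformly in $N$. Subtracting the Volterra systems \eqref{intsys} written for $(\mathscr K^N,\mathscr N^N,\mathscr C^N)$ and $(\mathscr K,\mathscr N,\mathscr C)$ and iterating once more, one obtains
$$
\|\mathscr K^N-\mathscr K\|_{L_2(\mathbb D)}+\|\mathscr N^N-\mathscr N\|_{L_2(\mathbb D)}+\|\mathscr C^N-\mathscr C\|_{C[0,\pi]}\to 0,
$$
and an analogous statement for the kernels of $\Psi^N(x,\lambda)$. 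Plugging these into \eqref{transK}, \eqref{transN}, \eqref{asymptDelta}, \eqref{asymptpsi} and using $H^N\to H$, one represents
$$
\Delta^N(\lambda)-\Delta(\lambda)=\rho\bigl(\varkappa_\pi^{N}(\rho)-\varkappa_\pi(\rho)\bigr)+r^N,\qquad \Psi^N(0,\lambda)-\Psi(0,\lambda)=\tilde\varkappa_\pi^{N}(\rho)-\tilde\varkappa_\pi(\rho)+\tilde r^N,
$$
where $r^N,\tilde r^N\to 0$ and the Paley--Wiener functions satisfy $\int_{\mathbb R}|\varkappa_\pi^N-\varkappa_\pi|^2\,d\rho\to 0$, and similarly for $\tilde\varkappa_\pi^N$, by the Plancherel identity applied to the $L_2$-convergence of the underlying kernels. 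An identical statement is available for $\tfrac{d}{d\lambda}\Delta^N-\tfrac{d}{d\lambda}\Delta$.

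Second, the uniform estimate \eqref{Dbelow}, which depends only on $\|\sigma\|_{L_2}$ and $|H|$, holds for $\Delta^N$ with constants independent of $N$. By Rouch\'e's Theorem applied in disks of radius $\delta$ around each $\rho_n$ (and in the strip near infinity using \eqref{asymptla} for both problems), for $N$ large each $\rho_n^N$ is the unique zero of $\Delta^N$ in such a disk. Writing the simple zero of $\Delta$ at $\rho_n$ as $\Delta(\rho^2)=2\rho_n\dot\Delta(\lambda_n)(\rho-\rho_n)+O((\rho-\rho_n)^2)$ and using $\Delta^N((\rho_n^N)^2)=0$ yields
$$
\rho_n^N-\rho_n=\frac{\Delta(\lambda_n)-\Delta^N(\lambda_n)}{2\rho_n\dot\Delta(\lambda_n)}+o(\rho_n^N-\rho_n).
$$
Since $|\dot\Delta(\lambda_n)|\ge C>0$ for all $n$ (as follows from the analogue of \eqref{Dbelow} for $\dot\Delta$) and $\rho_n\asymp n+1$, the right-hand side is controlled by $|\varkappa_\pi^N(\rho_n)-\varkappa_\pi(\rho_n)|$ plus vanishing terms. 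The values $\{\varkappa_\pi^N(\rho_n)-\varkappa_\pi(\rho_n)\}_{n\ge 0}$ are Fourier-type coefficients of an $L_2(-\pi,\pi)$ function whose norm tends to $0$; hence Bessel's inequality (together with \eqref{asymptla} to account for the perturbation of the sampling points $\rho_n$ versus $n$) gives $\sum_{n\ge 0}|\rho_n^N-\rho_n|^2\to 0$.

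Finally, to handle $\{\alpha_n^N-\alpha_n\}$, start from formula \eqref{aln} written for both problems:
$$
\alpha_n^N-\alpha_n=-\frac{\Psi^N(0,\lambda_n^N)}{\dot\Delta^N(\lambda_n^N)}+\frac{\Psi(0,\lambda_n)}{\dot\Delta(\lambda_n)}.
$$
Split the difference into three contributions: the change from $\lambda_n$ to $\lambda_n^N$ in the numerator and in the denominator (controlled by the $l_2$-estimate of $\{\rho_n^N-\rho_n\}$ already obtained together with the uniform bound on the derivatives of $\Psi(0,\cdot)$ and $\dot\Delta$ at points of distance $\ge\delta$ from integers), and the change of the functions themselves (controlled by the Paley--Wiener convergence of $\Psi^N(0,\cdot)-\Psi(0,\cdot)$ and $\dot\Delta^N-\dot\Delta$ evaluated at $\lambda_n$, again summable in $l_2$ by Bessel). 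Combining these two contributions yields \eqref{stab}. The main obstacle is the need to pass from $L_2$-convergence of kernels to $l_2$-convergence of point values at the shifting sample set $\{\rho_n^N\}$; this is handled by first using the already-proved convergence $\rho_n^N\to\rho_n$ to reduce to sampling at the fixed set $\{\rho_n\}$, and then invoking Bessel's inequality for the Riesz basis $\{\cos\rho_n t\}$ in $L_2(0,\pi)$.
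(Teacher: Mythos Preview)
Your plan is correct and follows essentially the same route as the paper: first deduce $L_2$-convergence of the transformation kernels (hence of the Paley--Wiener data $\mathscr P^N,\mathscr D^N,\mathscr R^N$) from $\sigma^N\to\sigma$, $H^N\to H$; then localize zeros by Rouch\'e, Taylor-expand at $\rho_n$, and apply Bessel's inequality to obtain the $l_2$-estimate for $\{\rho_n^N-\rho_n\}$; finally plug into~\eqref{aln} and split the difference of quotients to handle $\{\alpha_n^N-\alpha_n\}$. The only cosmetic difference is that the paper isolates the zero-stability step as a separate lemma (Lemma~\ref{lem:append}) and samples at the moving points $\tilde\rho_n$ with the reduction $\sin\tilde\rho_n t=\sin nt+\varkappa_n(t)$, whereas you sample at the fixed $\rho_n$ and invoke the Riesz-basis Bessel inequality; both variants work.
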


\begin{proof}
Theorem~\ref{thm:trans} implies that
\begin{equation} \label{DeltaPD}
\Delta(\la) = - \rho \sin \rho \pi + \rho \int_0^{\pi} \mathscr P(t) \sin \rho t \, dt + \mathscr D, \quad \mathscr P \in L_2(0, \pi),
\end{equation}
where
$$
\mathscr P(t) := \mathscr N(\pi, t) - H \left( 1 + \int_t^{\pi} \mathscr K(\pi, s) \, ds\right), \quad
\mathscr D := \mathscr C(\pi) + H\left( 1 + \int_0^{\pi} \mathscr K(\pi, s) \, ds\right).
$$
Similarly, the characteristic function $\Delta^N(\la)$ of the problem $L(\sigma^N, H^N)$ can be represented in the form
$$
\Delta^N(\la) = - \rho \sin \rho \pi + \rho \int_0^{\pi} \mathscr P^N(t) \sin \rho t \, dt + \mathscr D^N, \quad \mathscr P^N \in L_2(0, \pi).
$$

Analyzing the proof of Theorem~\ref{thm:trans}, we obtain that, if $\sigma^N \to \sigma$ in $L_2(0, \pi)$ and $H^N \to H$ as $N \to \iy$, then $\mathscr P^N \to \mathscr P$ in $L_2(0, \pi)$ and $\mathscr D^N \to \mathscr D$ as $N \to \iy$. By virtue of Lemma~\ref{lem:append}, we get
\begin{equation} \label{limrho}
\lim_{N \to \iy} \sum_{n = 0}^{\iy} |\rho_n^N - \rho_n|^2 = 0.
\end{equation}

It remains to prove the similar relation for $\al_n$. Using~\eqref{aln}, we obtain 
\begin{equation} \label{difal}
\al_n - \al_n^N = \frac{\tfrac{d}{d\la} \Delta(\la_n) (\Psi^N(0, \la_n^N) - \Psi(0, \la_n)) + (\tfrac{d}{d\la} \Delta(\la_n) - \tfrac{d}{d\la} \Delta^N(\la_n^N))\Psi(0, \la_n)}{\tfrac{d}{d\la} \Delta(\la_n) \tfrac{d}{d\la} \Delta^N(\la_n^N)}.
\end{equation}
Differentiating~\eqref{DeltaPD}, we get
\begin{equation} \label{difD}
\frac{d}{d\la} \Delta(\la) = -\frac{\pi}{2} \cos \rho \pi - \frac{1}{2\rho} \sin \rho \pi + \frac{1}{2} \int_0^{\pi} t \mathscr P(t) \cos \rho t \, dt + \frac{1}{2\rho} \int_0^{\pi} \mathscr P(t) \sin \rho t \, dt.
\end{equation}
Analogously to~\eqref{transK}, we have
\begin{equation} \label{transPsi}
\Psi(0, \la) = \cos \rho \pi + \int_0^{\pi} \mathscr R(t) \cos \rho t \, dt, \quad \mathscr R \in L_2(0, \pi).
\end{equation}
The relations similar to~\eqref{difD}, \eqref{transPsi} are valid for the functions $\tfrac{d}{d\la} \Delta^N(\la)$, $\Psi^N(0, \la)$ with $\mathscr P^N$, $\mathscr R^N$ instead of $\mathscr P$, $\mathscr R$, respectively. If $\sigma^N \to \sigma$ in $L_2(0, \pi)$ as $N \to \iy$, we have $\mathscr R^N \to \mathscr R$ in $L_2(0, \pi)$.

For sufficiently large $N_0$ and $N \ge N_0$, the relations~\eqref{asymptla}, \eqref{limrho}, \eqref{difD}, and~\eqref{transPsi} imply the estimates
\begin{equation} \label{estDPsi}
\left.
\begin{array}{c}
    |\tfrac{d}{d\la} \Delta(\la_n)|^{-1} \le C, \quad 
    |\tfrac{d}{d\la} \Delta^N(\la_n^N)|^{-1} \le C, \quad 
    |\tfrac{d}{d\la} \Delta(\la_n)| \le C, \quad
    |\Psi(0, \la_n)| \le C, \\ 
    |\Psi^N(0, \la_n^N) - \Psi(0, \la_n)| \le C (|\rho_n - \rho_n^N| + |\hat r_n^N|), \\
    |\tfrac{d}{d\la}\Delta(\la_n) - \tfrac{d}{d\la} \Delta^N(\la_n^N)| \le C (|\rho_n - \rho_n^N| + |\hat p_n^N| + |\hat d_n^N|),
\end{array}\right\}
\end{equation}
where
\begin{gather*}
\hat r_n^N = \int_0^{\pi} (\mathscr R(t) - \mathscr R^N(t)) \cos n t \, dt, \quad
\hat p_n^N = \int_0^{\pi} t (\mathscr P(t) - \mathscr P^N(t)) \cos nt \, dt, \\
\hat d_n^N = \int_0^{\pi} (\mathscr P(t) - \mathscr P^N(t)) \sin nt \, dt.
\end{gather*}
Recall that $\mathscr P^N \to \mathscr P$ and $\mathscr R^N \to \mathscr R$ in $L_2(0, \pi)$ as $N \to \iy$. Using Bessel's Inequality for the Fourier coefficients, we get
\begin{equation} \label{limP}
\lim_{N \to \iy} \sum_{n = 0}^{\iy} (|\hat r_n^N|^2 + |\hat p_n^N|^2 + |\hat d_n^N|^2) = 0.
\end{equation}
Combining~\eqref{limrho}, \eqref{difal}, \eqref{estDPsi} and~\eqref{limP}, we arrive at~\eqref{stab}.
\end{proof}

In view of the asymptotics~\eqref{asymptla}, the relation~\eqref{stab} holds for the data $\{ \la_n^N, \al_n^N \}_{n = 0}^{\iy}$, $N \in \mathbb N$, defined by~\eqref{deflaN}. By virtue of Lemma~\ref{lem:limN}, $\sigma^N \to \sigma$ in $L_2(0, \pi)$ and $H^N \to H$ as $N \to \iy$. Therefore, by Lemma~\ref{lem:stab}, the spectral data of $L(\sigma^N, H^N)$ converge to the spectral data of $L(\sigma, H)$ in the sense \eqref{stab}. Hence, the initially given numbers $\{ \la_n, \al_n \}_{n = 0}^{\iy}$ are the spectral data of the problem $L$, so Lemma~\ref{lem:sd} is proved. Lemmas~\ref{lem:solve}-\ref{lem:sd} together yield the sufficiency part of Theorem~\ref{thm:nsc}.

Finally, we arrive at Algorithm~\ref{alg:1} for constructive solution of Inverse Problem~\ref{ip:1}.

\begin{alg} \label{alg:1}
Let the data $\{ \la_n, \al_n \}_{n = 0}^{\iy}$ be given. We need to find $\sigma$ and $H$.

\begin{enumerate}
    \item Take the model problem $\tilde L = L(0, 0)$.
    \item Construct the sequence $\tilde \phi(x) = \{ \tilde \vv_{ni}(x) \}_{(n, i) \in J}$ and the operator $\tilde R(x)$, using~\eqref{defD}, \eqref{defR}.
    \item Solve the main equation~\eqref{main2} and so get $\phi(x) = \{ \vv_{ni}(x) \}_{(n, i) \in J}$.
    \item Find $\sigma$ and $H$ by the formulas~\eqref{defsi} and~\eqref{defH}, respectively.
\end{enumerate}
\end{alg}

Now we proceed to the non-self-adjoint case, when $\sigma(x)$, $x \in (0, \pi)$, and $H$ are not necessarily real. In this case, the proof of Lemma~\ref{lem:solve} fails. Therefore, we include the requirement of the unique solvability of the main equation into the necessary and sufficient conditions, given by the following theorem. We say that the problem $L(\sigma, H)$ belongs to the class $\mathcal V$ if $\la_n \ne \la_k$ for all $n \ne k$.

\begin{thm} \label{thm:nsc2}
For numbers $\{ \la_n, \al_n \}_{n = 0}^{\iy}$ to be the spectral data of a boundary value problem $L(\sigma, H) \in \mathcal V$, it is necessary and sufficient to fulfill the following conditions:

\smallskip

(i) $\la_n \ne \la_k$, $\al_n \ne 0$ for all $n, k \ge 0$, $n \ge k$.

\smallskip

(ii) The asymptotic formulas~\eqref{asymptla} hold.

\smallskip

(iii) For each fixed $x \in [0, \pi]$, the operator $(I + \tilde R(x)) \colon B \to B$ has the bounded inverse.
\end{thm}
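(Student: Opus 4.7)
The plan is to reduce Theorem~\ref{thm:nsc2} to the machinery built up in the self-adjoint case, isolating the single step (Lemma~\ref{lem:solve}) where the real/positivity structure was used.

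\textbf{Necessity.} Suppose $\{\la_n,\al_n\}_{n\ge 0}$ are the spectral data of an $L(\sigma,H)\in\mathcal V$. Condition (i) is immediate: simplicity is the definition of $\mathcal V$, while $\al_n\ne 0$ follows from $\al_n=\Res_{\la=\la_n}M(\la)$ together with $\la_n$ being a simple pole of the meromorphic function $M(\la)$ (see~\eqref{resM}). Condition (ii) is Lemma~\ref{lem:asympt}. The crux is (iii). To show that $I+\tilde R(x)$ is invertible, I would construct the \emph{dual} main equation obtained by interchanging the roles of $L$ and $\tilde L$: define $R(x)\colon B\to B$ by the same formulas as $\tilde R(x)$ but with the kernel $D(x,\la,\mu)=\langle\vv(x,\la),\vv(x,\mu)\rangle/(\la-\mu)$ built from the true $\vv$ and with weights $\tilde\al_{kj}$. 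Repeating the contour-integration argument of Lemma~\ref{lem:cont} with roles swapped produces a dual equation of the form $(I+R(x))\tilde\phi(x)=\phi(x)$, up to sign conventions dictated by contour orientation. Composing the two main equations and invoking a second Cauchy--Poincar\'e contour integration to collapse the resulting double sum yields the operator identities $(I+\tilde R(x))(I+R(x))=I$ and its transpose, and hence a bounded two-sided inverse. The boundedness, $l_2$-mapping property and continuity of $R(x)$ follow from the symmetric analogues of Lemmas~\ref{lem:RB}--\ref{lem:Rcont}, which do not depend on any structural property of $L$.

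\textbf{Sufficiency.} Assume (i)--(iii). Run the self-adjoint proof verbatim with hypothesis (iii) in place of Lemma~\ref{lem:solve}. The main equation~\eqref{main2} has a unique solution $\phi(x)\in B$ by (iii); Lemma~\ref{lem:psi} never used reality or positivity, and its proof goes through unchanged; the series~\eqref{defsi} and~\eqref{defH} converge by Lemma~\ref{lem:L2}, again without any self-adjointness assumption. The only nontrivial point is Lemma~\ref{lem:sd}'s approximation step: the finite-data sequences $\{\la_n^N,\al_n^N\}$ of~\eqref{deflaN} must correspond to actual regular Sturm-Liouville problems, and in the non-self-adjoint analogue of \cite[Theorem~1.6.2]{FY01} this requires invertibility of the associated operator $\tilde R^N(x)$. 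Because $\tilde R^N(x)\to\tilde R(x)$ in operator norm uniformly in $x\in[0,\pi]$ (an immediate consequence of~\eqref{estR} applied to the tail $\{\xi_k\}_{k>N}$) and $I+\tilde R(x)$ has a bounded inverse by (iii), the approximating operators $I+\tilde R^N(x)$ are invertible for all sufficiently large $N$. The regular non-self-adjoint result then yields $\sigma^N,H^N$ with $\sigma^N\to\sigma$ in $L_2(0,\pi)$ and $H^N\to H$, and Lemma~\ref{lem:stab} (which is stated and proven for complex-valued potentials and complex $H$) identifies $\{\la_n,\al_n\}$ as the spectral data of $L(\sigma,H)$.

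\textbf{Main obstacle.} The principal technical difficulty is establishing the operator identity $(I+\tilde R(x))(I+R(x))=I$ for the necessity of (iii). The composition produces a double sum whose cancellation rests on a second contour-integration identity, and conditional convergence in $B$ must be handled with care, tracking the two copies indexed by $i=0,1$ together with the weights $\chi_n$ controlling their differences. The estimates of Lemmas~\ref{lem:RB}--\ref{lem:Rl1} provide the right tools, but their application to products of kernels requires delicate bookkeeping. A secondary point is checking the non-self-adjoint regular-potential analogue of \cite[Theorem~1.6.2]{FY01} used in the approximation; if that statement is not already in the literature in the form required, it would need to be recorded (its proof is fully parallel to the argument above, but in finite dimensions).
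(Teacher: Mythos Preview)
Your proposal is correct and follows essentially the same route as the paper. The paper's own proof is a two-line remark: necessity of (iii) is ``similar to \cite[Theorem~1.6.1]{FY01}'' (which is precisely the dual-operator construction $(I+\tilde R(x))(I+R(x))=I$ you outline), and sufficiency ``follows from Lemmas~\ref{lem:psi}--\ref{lem:sd}, which are also valid for the non-self-adjoint case''. Your write-up is in fact more careful than the paper on one point: you explicitly note that the approximation step in Lemma~\ref{lem:sd} relies on a \emph{non-self-adjoint} regular-potential result (the analogue of \cite[Theorem~1.6.3]{FY01} rather than 1.6.2), and you supply the perturbation argument showing $I+\tilde R^N(x)$ is invertible for large $N$ from (iii) and the operator-norm convergence $\tilde R^N(x)\to\tilde R(x)$; the paper simply asserts that the lemmas carry over. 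One small inaccuracy: Lemma~\ref{lem:stab} is actually stated for real $H,H^N$, not complex as you claim, though its proof uses no reality hypothesis and extends verbatim.
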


Theorem~\ref{thm:nsc2} is analogous to \cite[Theorem~1.6.3]{FY01} for the case of a regular potential. By necessity, condition (iii) is proved similarly to \cite[Theorem~1.6.1]{FY01}. The sufficiency part follows from Lemmas~\ref{lem:psi}-\ref{lem:sd}, which are also valid for the non-self-adjoint case.

\section{Appendix}

In this section, we prove the auxiliary proposition (Lemma~\ref{lem:append}), which is used in the proof of Lemma~\ref{lem:stab}. Note that analogous propositions for the case of regular potentials have been obtained and applied to inverse problems in \cite{BB17} (see Lemma~3) and in \cite{YB20} (see Lemma~5).

Consider arbitrary entire functions of the form
\begin{align} \label{Delta}
    & \Delta(\la) = -\rho \sin \rho \pi + \rho \int_0^{\pi} \mathscr P(t) \sin \rho t \, dt + \mathscr D, \\ \label{Deltat}
    & \tilde \Delta(\la) = -\rho \sin \rho \pi + \rho \int_0^{\pi} \tilde {\mathscr P}(t) \sin \rho t \, dt + \tilde {\mathscr D},
\end{align}
where $\la = \rho^2$, the functions $\mathscr P$, $\tilde {\mathscr P}$ belong to $L_2(0, \pi)$, and $\mathscr D$, $\tilde {\mathscr D}$ are constants. Rouch\'e's Theorem implies that $\Delta(\la)$ and $\tilde \Delta(\la)$ have the zeros $\{ \rho_n^2 \}_{n = 0}^{\iy}$ and $\{ \tilde \rho_n^2 \}_{n = 0}^{\iy}$, respectively, $\arg \rho_n, \arg \tilde \rho_n \in [-\tfrac{\pi}{2}, \tfrac{\pi}{2})$ and
\begin{equation} \label{asymptrho}
\rho_n = n + \varkappa_n, \quad \tilde \rho_n = n + \tilde \varkappa_n, \quad
\{ \varkappa_n \}, \{ \tilde \varkappa_n \} \in l_2.
\end{equation}

\begin{lem} \label{lem:append}
Let $\Delta(\la)$ be a function of the form~\eqref{Delta} with simple zeros $\{ \rho_n^2 \}_{n = 0}^{\iy}$. Then there exists $\de > 0$ (depending on $\Delta(\la)$) such that, for any function $\tilde {\mathscr P} \in L_2(0, \pi)$ and any constant $\tilde {\mathscr D}$ satisfying the estimates 
\begin{equation} \label{estPD}
\| \mathscr P - \tilde {\mathscr P} \|_{L_2(0, \pi)} \le \de, \quad 
|\mathscr D - \tilde {\mathscr D}| \le \de,
\end{equation}
the zeros $\{ \tilde \rho_n^2 \}_{n = 0}^{\iy}$ of the function $\tilde \Delta(\la)$ defined by~\eqref{Deltat} satisfy the estimate
\begin{equation} \label{sumrho}
\left( \sum_{n = 0}^{\iy} |\rho_n - \tilde \rho_n|^2 \right)^{1/2} \le 
C (\| \mathscr P - \tilde {\mathscr P} \|_{L_2(0, \pi)} +
|\mathscr D - \tilde {\mathscr D}|),
\end{equation}
where the constant $C$ depends only on $\Delta(\la)$ and not on $\tilde {\mathscr P}$, $\tilde {\mathscr D}$.
\end{lem}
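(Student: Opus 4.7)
The plan is threefold: use Rouché's theorem to pair up the zeros of $\Delta$ and $\tilde\Delta$; for each pair, combine the vanishing $\tilde\Delta(\tilde\rho_n^2)=0$ with a mean-value identity in $\la$ to translate the bound on $\Delta-\tilde\Delta$ into a bound on $\rho_n-\tilde\rho_n$; finally sum in $l_2$ via a Bessel-type inequality. Throughout, write $g:=\mathscr P-\tilde{\mathscr P}$ and $e:=\mathscr D-\tilde{\mathscr D}$, so that
\[
(\tilde\Delta-\Delta)(\la) = \rho\int_0^\pi g(t)\sin\rho t\,dt + e,
\]
which is dominated on $\{\rho\in G_{\de_0}:|\rho|\ge\rho^*\}$ by $C|\rho|e^{|\tau|\pi}\delta$.

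Combining this with the standard lower bound $|\Delta(\la)|\ge C_{\de_0}|\rho|e^{|\tau|\pi}$ of type \eqref{Dbelow}, Rouché's theorem applied to the large circles $|\rho|=N+\tfrac12$ and to small disks $|\rho-\rho_n|<r_n$ (with $r_n$ uniformly bounded below, using the asymptotics $\rho_n=n+\varkappa_n$, $\{\varkappa_n\}\in l_2$, for $n$ large and the simple-zero hypothesis for the finitely many small $n$) yields a one-to-one pairing $\rho_n\leftrightarrow\tilde\rho_n$ with $|\tilde\rho_n-\rho_n|<r_n$, provided $\delta$ is small enough depending only on $\Delta$.

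For the quantitative step, use $\tilde\Delta(\tilde\rho_n^2)=0$ to get
\[
\Delta(\tilde\rho_n^2) = -(\tilde\Delta-\Delta)(\tilde\rho_n^2) = -\tilde\rho_n c_n - e, \qquad c_n := \int_0^\pi g(t)\sin\tilde\rho_n t\,dt,
\]
and on the other hand
\[
\Delta(\tilde\rho_n^2) = \Delta(\tilde\rho_n^2)-\Delta(\rho_n^2) = (\tilde\rho_n^2-\rho_n^2)\int_0^1\tfrac{d}{d\la}\Delta\bigl(\rho_n^2+s(\tilde\rho_n^2-\rho_n^2)\bigr)\,ds.
\]
By an analogue of \eqref{difD}, $\tfrac{d}{d\la}\Delta(\la)\to-\tfrac{\pi}{2}\cos\rho\pi$ at the $\rho_n$ for large $n$, so together with the simple-zero hypothesis for small $n$ and continuity, one has a uniform lower bound $|\tfrac{d}{d\la}\Delta|\ge c_0>0$ on the union of the disks, after shrinking $r_*$ if necessary. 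Combined with $|\rho_n+\tilde\rho_n|\ge c_1(n+1)$ (using a harmless shift in $\la$ to ensure $\rho_0\ne 0$), this yields
\[
|\tilde\rho_n-\rho_n| \le \frac{C}{n+1}\bigl(|\tilde\rho_n||c_n|+|e|\bigr) \le C|c_n|+\frac{C|e|}{n+1}.
\]

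For the $l_2$-summation, note that $\tilde\rho_n=n+\tilde\varkappa_n$ with $\{\tilde\varkappa_n\}\in l_2$ of norm uniformly bounded in the $\delta$-ball around $(\mathscr P,\mathscr D)$ (by the standard asymptotic analysis behind \eqref{asymptrho}), so split
\[
c_n = \int_0^\pi g(t)\sin nt\,dt + \int_0^\pi g(t)\bigl(\sin\tilde\rho_n t-\sin nt\bigr)\,dt,
\]
apply $|\sin\tilde\rho_n t-\sin nt|\le C|\tilde\varkappa_n|$ and Bessel's inequality for $\{\sin nt\}$ in $L_2(0,\pi)$ to obtain $\|\{c_n\}\|_{l_2}\le C\|g\|_{L_2}$, and conclude
\[
\sum_{n=0}^\infty|\rho_n-\tilde\rho_n|^2 \le C\sum_{n=0}^\infty|c_n|^2 + C|e|^2\sum_{n=0}^\infty\frac{1}{(n+1)^2} \le C\bigl(\|g\|_{L_2}^2+|e|^2\bigr),
\]
which is \eqref{sumrho}. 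The main obstacle is producing a single set of constants $r_*,c_0,c_1,\delta_0$ that simultaneously validates the Rouché pairing, the lower bound on $\tfrac{d}{d\la}\Delta$ on the corresponding disks, and the bound $|\rho_n+\tilde\rho_n|\ge c_1(n+1)$; the small-$n$ portion is handled via the simple-zero hypothesis together with continuity, while the tail relies on the asymptotic form of $\Delta$ and $\tfrac{d}{d\la}\Delta$.
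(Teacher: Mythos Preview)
Your proof is correct and follows essentially the same route as the paper: Rouch\'e localization of $\tilde\rho_n$ near $\rho_n$, a mean-value identity linking $\tilde\rho_n-\rho_n$ to $(\Delta-\tilde\Delta)(\tilde\rho_n^2)$, a uniform lower bound on the derivative, and the $l_2$-summation via Bessel's inequality after writing $\sin\tilde\rho_n t=\sin nt+\varkappa_n(t)$. The only cosmetic difference is that you work in the $\la$-variable, bounding $|\tfrac{d}{d\la}\Delta|\ge c_0$ and $|\rho_n+\tilde\rho_n|\ge c_1(n+1)$ separately, while the paper passes to $s(\rho):=\Delta(\rho^2)$ and uses Taylor's formula in $\rho$ together with the single bound $|\tfrac{d}{d\rho}s(\rho)|\ge C(n+1)$; since $\tfrac{d}{d\rho}s=2\rho\,\tfrac{d}{d\la}\Delta$, the two formulations are equivalent.
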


\begin{proof}
Consider the contours $\ga_{n,r} := \{ \rho \in \mathbb C \colon |\rho - \rho_n| = r \}$, where the radius $r > 0$ is fixed and so small that the contours $\{ \ga_{n,r} \}_{n \ge 0}$ do not intersect with each other and $(-\rho_m) \not\in \overline{\mbox{int}\, \ga_n}$ for all $n, m \ge 0$. (The case $\rho_n = 0$ requires minor changes). Define $s(\rho) := \Delta(\rho^2)$, $\tilde s(\rho) := \tilde \Delta(\rho^2)$. If $\tilde {\mathscr P}$ and $\tilde {\mathscr D}$ satisfy the estimates~\eqref{estPD} for some $\de > 0$, then 
$$
|s(\rho)| \ge \frac{C_0}{n + 1}, \quad |s(\rho) - \tilde s(\rho)| \le C (n + 1) \de, \quad \rho \in \ga_{n, r}, \quad n \ge 0,
$$
where the constants $C_0$ and $C$ depend on $r$ and not on $n$, $\rho$, $\tilde {\mathscr P}$, $\tilde {\mathscr D}$. Consequently, we can choose a sufficiently small $\de > 0$ such that
$$
\frac{|s(\rho) - \tilde s(\rho)|}{|s(\rho)|} < 1, \quad \rho \in \ga_{n, r}, \quad n \ge 0.
$$
By Rouch\'e's Theorem, we conclude that, for each $n \ge 0$, the function $\tilde s(\rho)$ has exactly one simple zero $\tilde \rho_n$ inside $\ga_{n, r}$.

The Taylor formula yields
\begin{equation} \label{sm11}
s(\tilde \rho_n) = s(\rho) + \tfrac{d}{d\rho} s(\theta_n) (\tilde \rho_n - \rho_n), \quad \theta_n \in \mbox{int} \, \ga_{n,r}, \quad n \ge 0.
\end{equation}
Subtracting~\eqref{Deltat} from~\eqref{Delta}, we get
\begin{equation} \label{sm12}
s(\tilde \rho_n) - \tilde s(\tilde \rho_n) = \tilde \rho_n \int_0^{\pi} \hat {\mathscr P}(t) \sin \tilde \rho_n t \, dt + \hat{\mathscr D}, \quad \hat{\mathscr P} := \mathscr P - \tilde{\mathscr P}, \quad \hat{\mathscr D} := \mathscr D - \tilde {\mathscr D}.
\end{equation}
Combining~\eqref{sm11} and~\eqref{sm12}, we arrive at the relation
\begin{equation} \label{sm13}
\tilde \rho_n - \rho_n = \bigl(\tfrac{d}{d\rho} s(\theta_n)\bigr)^{-1} \left( \tilde \rho_n \int_0^{\pi} \hat {\mathscr P}(t) \sin \tilde \rho_n t \, dt + \hat{\mathscr D} \right).
\end{equation}
Using~\eqref{Delta} and~\eqref{asymptrho}, we obtain the estimate
\begin{equation} \label{sm14}
\bigl| \tfrac{d}{d\rho}s(\rho)\bigr| \ge C (n + 1), \quad \rho \in \mbox{int}\,\ga_{n, r}, \quad n \ge 0.
\end{equation}
We also represent $\sin \tilde \rho_n t$ in the form
\begin{equation} \label{sm15}
\sin \tilde \rho_n t = \sin n t + \varkappa_n(t), \quad \left\{ \max_{t \in [0, \pi]} |\varkappa_n(t)| \right\}_{n \ge 0} \in l_2.
\end{equation}
Substituting~\eqref{sm14} and~\eqref{sm15} into~\eqref{sm13}, we get
\begin{equation} \label{sm16}
|\tilde \rho_n - \rho_n| \le C \left( |\hat p_n| + \| \hat {\mathscr P} \|_{L_2(0, \pi)} \varkappa_n + \frac{|\hat {\mathscr D}|}{n + 1} \right), \quad n \ge 0,
\end{equation}
where $\{ \hat p_n \}_{n = 0}^{\iy}$ are the Fourier coefficients:
$$
\hat p_n := \int_0^{\pi} \hat {\mathscr P}(t) \sin nt \, dt.
$$
Using Bessel's Inequality, \eqref{estPD}, and~\eqref{sm16}, we obtain the estimate~\eqref{sumrho}.
\end{proof}

\medskip

\textbf{Acknowledgements.} The author is grateful to Professors Namig Guliyev and Sergey Buterin for their valuable comments on this manuscript.

\medskip

\noindent Natalia Pavlovna Bondarenko \\
1. Department of Applied Mathematics and Physics, Samara National Research University, \\
Moskovskoye Shosse 34, Samara 443086, Russia, \\
2. Department of Mechanics and Mathematics, Saratov State University, \\
Astrakhanskaya 83, Saratov 410012, Russia, \\
e-mail: {\it BondarenkoNP@info.sgu.ru}

\end{document}